\documentclass[11pt]{amsproc}

\usepackage[hiresbb]{graphicx}
\usepackage{xcolor}

\usepackage{amsmath,amsthm,amssymb,mathtools}
\allowdisplaybreaks

\usepackage{fullpage}

\usepackage{here}
\usepackage{hyperref}
\usepackage[abbrev]{amsrefs}
\renewcommand{\MR}[1]{}

\usepackage{aliascnt}
\usepackage[capitalize,nameinlink,noabbrev,nosort]{cleveref}

\hypersetup{
  colorlinks=true,
  linkcolor=brown,
  citecolor=brown,
  filecolor=brown,
  urlcolor=brown,
  pdftitle={Modular integrals for individual modular knots},
  pdfauthor={Toshiki Matsusaka},
  pdfsubject={2020 Mathematics Subject Classification: 11F67, 11F20, 57K10},
}

\makeatletter
\@namedef{subjclassname@2020}{\textup{2020} Mathematics Subject Classification}
\makeatother

\theoremstyle{remark}

\newaliascnt{remark}{theoremcounter}
\newtheorem{remark}[remark]{Remark}
\aliascntresetthe{remark}

\theoremstyle{definition}

\newaliascnt{definition}{theoremcounter}
\newtheorem{definition}[definition]{Definition}
\aliascntresetthe{definition}

\newaliascnt{example}{theoremcounter}
\newtheorem{example}[example]{Example}
\aliascntresetthe{example}

\theoremstyle{plain}

\newaliascnt{lemma}{theoremcounter}
\newtheorem{lemma}[lemma]{Lemma}
\aliascntresetthe{lemma}

\newaliascnt{proposition}{theoremcounter}
\newtheorem{proposition}[proposition]{Proposition}
\aliascntresetthe{proposition}

\newaliascnt{corollary}{theoremcounter}

\aliascntresetthe{corollary}

\newaliascnt{conjecture}{theoremcounter}

\aliascntresetthe{conjecture}

\newaliascnt{theorem}{theoremcounter}
\newtheorem{theorem}[theorem]{Theorem}
\aliascntresetthe{theorem}

\newaliascnt{question}{theoremcounter}

\aliascntresetthe{question}

\newaliascnt{exercise}{theoremcounter}

\aliascntresetthe{exercise}

\numberwithin{equation}{section}

\newcommand{\Z}{\mathbb{Z}}

\newcommand{\R}{\mathbb{R}}

\newcommand{\dd}{\mathrm{d}}
\newcommand{\bbH}{\mathbb{H}}

\ExplSyntaxOn
\clist_map_inline:nn
  { A,B,C,D,E,F,G,H,I,J,K,L,M,N,O,P,Q,R,S,T,U,V,W,X,Y,Z }
  {
    \cs_new:cpn { cal#1 } { \mathcal{#1} }
  }
\ExplSyntaxOff

\ExplSyntaxOn
\clist_map_inline:nn
  { A,B,C,D,E,F,G,H,I,J,K,L,M,N,O,P,Q,R,S,T,U,V,W,X,Y,Z }
  {
    \cs_new:cpn { mat#1 } { \mathrm{#1} }
  }
\ExplSyntaxOff

\DeclareMathOperator{\ImNew}{Im}
\renewcommand{\Im}{\ImNew}
\DeclareMathOperator{\ReNew}{Re}
\renewcommand{\Re}{\ReNew}

\newcommand{\SL}{\mathrm{SL}}

\DeclareMathOperator{\sgn}{sgn}
\DeclareMathOperator{\tr}{tr}
\DeclareMathOperator{\Cyc}{Cyc}

\newcommand{\pmat}[1]{\begin{pmatrix}#1\end{pmatrix}}

\newcommand{\smat}[1]{\bigl(\begin{smallmatrix}#1\end{smallmatrix}\bigr)}

\begin{document}
\raggedbottom

\title[Modular integrals for individual modular knots]{Modular integrals for individual modular knots}

\author[T.~Matsusaka]{Toshiki Matsusaka}
\address{Faculty of Mathematics, Kyushu University, Motooka 744, Nishi-ku, Fukuoka 819-0395, Japan}
\email{matsusaka@math.kyushu-u.ac.jp}


\subjclass[2020]{Primary 11F67; Secondary 11F20, 57K10}



\begin{abstract}
	To each modular knot, we attach a weight $2$ modular integral for $\SL_2(\Z)$ whose homogenized cycle integrals recover its linking numbers with other modular knots in $S^3$. This construction answers Choie--Zagier's question of explicitly constructing modular integrals with prescribed rational period functions in weight $2$. Our formula applies to individual modular knots without symmetrization, thereby realizing an approach discussed but not pursued by Duke--Imamo\={g}lu--T\'{o}th. This gives an answer to Ghys' question on pairwise linking numbers in terms of modular integrals, as sought by Simon.
\end{abstract}

\maketitle



\section{Introduction}

Let $\Gamma \coloneqq \SL_2(\Z)$, and let $\bbH$ denote the upper half-plane. The $3$-manifold $\Gamma\backslash \SL_2(\R)$ is homeomorphic to the complement of a trefoil $\calT$ in $S^3$ and carries the geodesic flow given by right multiplication by $\mathrm{diag}(e^{t/2}, e^{-t/2})$, $t \in \R$. Following Ghys~\cite{Ghys2007}, we call the periodic orbits of this flow, oriented by increasing $t$, \emph{modular knots}. These knots are indexed by primitive hyperbolic conjugacy classes in $\Gamma$ with $\tr(\gamma) > 2$. We write $C_\gamma$ for the modular knot corresponding to the class of $\gamma$. Ghys showed that the linking number of $C_\gamma$ with the (suitably oriented) trefoil is given by
\begin{align}\label{eq:Ghys-linking}
	\mathrm{Lk}(C_\gamma, \calT) = \Psi(\gamma),
\end{align}
where $\Psi \colon \Gamma \to \Z$ is the classical Rademacher symbol (see~\cite{Rademacher1972, MatsusakaUeki2023} for more details). Turning from linking with the trefoil to linking between modular knots, Ghys asked for an arithmetical or combinatorial computation of $\mathrm{Lk}(C_\gamma, C_\sigma)$~\cite[p.~273]{Ghys2007}. 

Duke--Imamo\={g}lu--T\'{o}th~\cite{DIT2017} approached this question by relating modular knots to rational period functions. To explain this connection, we first recall the classical construction of the Rademacher symbol. The holomorphic logarithm of $\Delta(z) \coloneqq q \prod_{m \ge 1} (1-q^m)^{24}$, where $q \coloneqq e^{2\pi iz}$, gives the Dedekind and Rademacher symbols by
\begin{align}\label{eq:def-Phi-Psi}
	\Phi(\sigma) \coloneqq \frac{1}{2\pi} \lim_{y \to \infty} \Im \bigg(\log \Delta(\sigma iy) - \log \Delta(iy) \bigg), \qquad \Psi(\sigma) \coloneqq \lim_{n \to \infty} \frac{\Phi(\sigma^n)}{n}.
\end{align}
Here $\sigma \in \Gamma$, and the passage from $\Phi$ to $\Psi$ is called \emph{homogenization}~\cite[(1.4), (1.6), and (1.8)]{DIT2017}. We use the homogenized convention for $\Psi$, which vanishes on elliptic elements. The connection with rational period functions becomes visible upon differentiating $\log \Delta$. Its derivative is $\Delta'/\Delta = 2\pi iE_2$, where $E_2(z) \coloneqq 1 - 24 \sum_{m \ge 1} \sigma_1(m) q^m$ and $\sigma_1(m) \coloneqq \sum_{d \mid m} d$, and the failure of $E_2$ to be modular is measured by the rational term in its transformation law:
\[
	z^{-2} E_2 \left(-\frac{1}{z}\right) - E_2(z) = \frac{6}{\pi i} \frac{1}{z}.
\]
Moreover, for a hyperbolic $\gamma \in \Gamma$ with $\tr(\gamma) >2$ and any $z_0 \in \bbH$, we have
\begin{align}\label{eq:classical-hcyc}
	\Psi(\gamma) = \frac{1}{2\pi} \Im \lim_{N \to \infty} \int_{\gamma^N z_0}^{\gamma^{N+1} z_0} 2\pi i E_2(z) \, \dd z
\end{align}
(see~\cite[(1.3)]{Matsusaka2024}). Thus Ghys' theorem identifies $\mathrm{Lk}(C_\gamma, \calT)$ with the normalized imaginary part of this \emph{homogenized cycle integral}, introduced in~\cite{Matsusaka2024} following the terminology of~\cite{LaegelerSchwagenscheidt2022}.

The term $1/z$ is a basic example of a \emph{rational period function} of weight $2$, a notion studied by Knopp~\cite{Knopp1978}. For an integer $k \ge 1$, a rational period function of weight $2k$ on $\Gamma$ is a rational function $\rho(z)$ satisfying
\[
	\rho|_{2k}(1+S) = \rho|_{2k}(1+U+U^2) = 0,
\]
where $S \coloneqq \smat{0 & -1 \\ 1 & 0}$ and $U \coloneqq \smat{1 & -1 \\ 1 & 0}$ are finite-order generators of $\Gamma$, and $|_{2k}$ denotes the usual weight $2k$ slash action. These relations are the compatibility conditions for a transformation law
\begin{align}\label{eq:modular-int-trans}
	F(z+1) = F(z), \qquad (F|_{2k}S)(z) - F(z) = \rho(z).
\end{align}
A holomorphic function $F$ on $\bbH$ satisfying these equations is called a \emph{modular integral} of weight $2k$. In particular, $2\pi iE_2$ is a modular integral of weight $2$ with rational period function $12/z$. More generally, the existence of a modular integral with a prescribed rational period function is not immediate from the defining relations, but follows from a result of Knopp~\cite{Knopp1974}.

Choie--Zagier~\cite{ChoieZagier1993} subsequently gave an explicit classification of all rational period functions on $\Gamma$ in terms of indefinite binary quadratic forms. In weight $2$, their analysis of poles yields rational period functions $\rho_\gamma(z)$ indexed by primitive hyperbolic conjugacy classes with positive trace, or equivalently by modular knots $C_\gamma$. Together with $1/z$ and the trivial period function $1 - z^{-2}$, these form a basis for the space of weight $2$ rational period functions. They also raised the problem of explicitly constructing modular integrals with prescribed rational period functions~\cite[Section~3.3]{ChoieZagier1993}. For certain rational period functions of weights greater than $2$, Parson~\cite{Parson1993} constructed modular integrals using partial hyperbolic Eisenstein series.

Duke--Imamo\={g}lu--T\'{o}th~\cite{DIT2010, DIT2011} constructed modular integrals with Fourier coefficients given by cycle integrals. In weights greater than $2$, these agree with Parson's functions up to cusp forms. In weight $2$, they obtained modular integrals $F_\gamma^{\mathrm{sym}}(z)$ with rational period functions $\frac{1}{2}(\rho_\gamma(z) + \rho_{\gamma^{-1}}(z))$. By regularizing Parson's Eisenstein series, the author~\cite[Theorem~1.1]{Matsusaka2024} later recovered $F_\gamma^{\mathrm{sym}}(z)$.

The role of $2\pi iE_2$ in Ghys' theorem has an analogue for these modular integrals. Duke--Imamo\={g}lu--T\'{o}th~\cite{DIT2017} used the modular integrals $F_\gamma^{\mathrm{sym}}(z)$ to express linking numbers between the symmetrized cycles $C_\gamma+C_{\gamma^{-1}}$ and $C_\sigma+C_{\sigma^{-1}}$, which are null-homologous in the trefoil complement. Their result can be expressed in terms of homogenized cycle integrals~\cite[Theorem~1.2]{Matsusaka2024}:
\begin{align}\label{eq:sym-linking}
	\frac{1}{4} \mathrm{Lk}(C_\gamma + C_{\gamma^{-1}}, C_\sigma + C_{\sigma^{-1}}) = \frac{1}{2\pi} \Im \lim_{N \to \infty} \int_{\sigma^N z_0}^{\sigma^{N+1} z_0} F_\gamma^{\mathrm{sym}}(z)\, \dd z.
\end{align}
Here the two symmetrized cycles are assumed to be disjoint.

On the other hand, Simon~\cite{Simon2025} used a different approach to obtain formulas for the individual linking numbers $\mathrm{Lk}(C_\gamma, C_\sigma)$, thereby answering Ghys' original question. He also asked for a modular interpretation of these linking numbers along the lines of Duke--Imamo\={g}lu--T\'{o}th~\cite[Section~8.2]{Simon2025}.

In this paper, we give an explicit solution in weight $2$ to the modular integral construction problem posed by Choie--Zagier. For each rational period function $\rho_\gamma(z)$, we construct a modular integral $F_\gamma(z)$ and express its Fourier coefficients in terms of cycle integrals. After an explicit Eisenstein correction, their homogenized cycle integrals give the linking numbers of individual modular knots in $S^3$. This gives a modular realization of the approach to linking numbers discussed but not pursued by Duke--Imamo\={g}lu--T\'{o}th~\cite{DIT2017}. Our results thus provide a modular answer to Ghys' question and the modular interpretation sought by Simon.

To state our results, fix a primitive hyperbolic $\gamma \in \Gamma$ with $\tr(\gamma) > 2$. For each conjugate $h \sim \gamma$, let $w_h^+$ and $w_h^-$ denote its attracting and repelling fixed points. We define the weight $2$ rational period function by the finite sum
\begin{align}\label{eq:rho-gamma}
	\rho_\gamma(z) \coloneqq 2 \sum_{\substack{h \sim \gamma \\ w_h^- w_h^+ < 0}} \frac{\sgn(w_h^+)}{z- w_h^+} = 2 \sum_{\substack{h \sim \gamma \\ w_h^- < 0 < w_h^+}} \left(\frac{1}{z- w_h^+} - \frac{1}{z + 1/w_h^+} \right).
\end{align}
This definition follows the construction in Choie--Zagier~\cite[Theorem~1]{ChoieZagier1993}. Since their statement requires modification when $k=1$, we verify directly in \cref{prop:modular-cocycle} that $\rho_\gamma$ is a rational period function of weight $2$. For the linking number formula, we seek a solution $F = F_\gamma^{\mathrm{lk}}$, holomorphic on $\bbH$ and at $i\infty$, to the system
\begin{align}\label{eq:linking-trans}
	F(z+1) = F(z), \qquad (F|_2 S)(z) - F(z) = \rho_\gamma(z) + \frac{2\Psi(\gamma)}{z}.
\end{align}
Let $\overline{S}_\gamma$ be the oriented closed geodesic on $\Gamma \backslash \bbH$ projected from $C_\gamma$, with $\dd s = |\dd z|/\Im z$ and length $\ell_\gamma \coloneqq \int_{\overline{S}_\gamma} \dd s$. Set $E_2^*(z) \coloneqq E_2(z) - 3/(\pi \Im z)$. For $m \ge 0$, let $j_m$ denote the unique weakly holomorphic modular function on $\Gamma$ with $j_m(z) = q^{-m} + O(q)$.

\begin{theorem}\label{thm:A}
	The system~\eqref{eq:linking-trans} has a unique solution $F_\gamma^{\mathrm{lk}}$ holomorphic on $\bbH$ and at $i\infty$. Its Fourier expansion is
	\begin{align}\label{eq:F-lk}
		F_\gamma^{\mathrm{lk}}(z) = \ell_\gamma + \sum_{m=1}^\infty b_\gamma(m) q^m,
	\end{align}
	where
	\begin{align}\label{eq:b-gamma}
		b_\gamma(m) = \int_{\overline{S}_\gamma} j_m(z) \, \dd s - \frac{\pi i}{3} \int_{\overline{S}_\gamma} \bigg(j_m(z) + 24 \sigma_1(m)\bigg) E_2^*(z)\, \dd z.
	\end{align}
\end{theorem}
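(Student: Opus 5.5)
Uniqueness comes first. If $F_1,F_2$ both solve \eqref{eq:linking-trans}, then $G=F_1-F_2$ is $1$-periodic and satisfies $G|_2S=G$. Hence $G$ is a holomorphic modular form of weight $2$ on $\Gamma$, holomorphic at $i\infty$, and so $G=0$.

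For existence I would write down the candidate
\[
F(z)=\ell_\gamma+\sum_{m\ge1}b_\gamma(m)q^m,
\]
with $b_\gamma(m)$ as in \eqref{eq:b-gamma}, and check the transformation law. The cleanest route is a generating-function identity. I would use the classical Asai--Kaneko--Ninomiya identity
\[
\sum_{m\ge0} j_m(\tau)\,q^m=\frac{E_4^2E_6(z)}{\Delta(z)}\cdot\frac{1}{j(z)-j(\tau)}=-\frac{1}{2\pi i}\frac{j'(z)}{j(z)-j(\tau)},
\]
valid for $\Im z$ large, with $j_0=1$. Combined with $\sum_m\sigma_1(m)q^m=(1-E_2(z))/24$, this lets me write, for $\Im z>\max_{\overline S_\gamma}\Im$,
\[
F(z)=\int_{\overline S_\gamma}K(z,\tau)\,\dd s-\frac{\pi i}{3}\int_{\overline S_\gamma}\Bigl(K(z,\tau)+1-E_2(z)\Bigr)E_2^*(\tau)\,\dd\tau ,
\]
with $K(z,\tau)=-\frac{1}{2\pi i}\frac{j'(z)}{j(z)-j(\tau)}$. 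The constant term needs a check: the $m=0$ term of the first integral gives $\ell_\gamma$, and the $E_2^*$ terms should cancel at $m=0$. The cancellation uses $\int_{\overline S_\gamma}E_2^*\,\dd\tau$, which is $i$ times a multiple of $\Psi(\gamma)$ by \eqref{eq:classical-hcyc}, since $E_2^*\,\dd\tau$ is $\Gamma$-invariant.

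As a function of $z$, the integrand $K(z,\tau)$ is a weight-$2$ meromorphic modular form, namely $\tfrac{-1}{2\pi i}\,\dd\log(j(z)-j(\tau))$. The term $E_2(z)$ contributes the $1/z$ period. So the expression above continues meromorphically in $z$ to a weight-$2$ modular object plus $\frac{\pi i}{3}E_2(z)\int_{\overline S_\gamma}E_2^*\,\dd\tau$. Its Fourier expansion is valid only above the geodesic, however.

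The key step is the analytic continuation of $F$ to all of $\bbH$. When $z$ crosses a lift $S_h$ of the closed geodesic, the integral acquires a residue jump. This comes from the simple pole of $K(z,\tau)$ at $\tau\in\Gamma z$, whose residue is $\frac{1}{2\pi i}\cdot\frac{1}{\tau-z}$ after pulling back. The holomorphic function $F$ on $\bbH$ is therefore the integral plus correction terms. Equivalently, and more cleanly, I would unfold the integral over $\overline S_\gamma$ to a sum over conjugates $h\sim\gamma$ of integrals over the full geodesics $S_h$ from $w_h^-$ to $w_h^+$, in the spirit of the partial hyperbolic Eisenstein series of Parson and \cite{Matsusaka2024}. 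One then computes $F|_2S-F$ as the contribution of those geodesics $S_h$ separating $i\infty$ from $0$, i.e.\ those with $w_h^-w_h^+<0$. Each such geodesic contributes a term $\frac{2\,\sgn(w_h^+)}{z-w_h^+}$ from evaluating the $\dd s$ and $\dd\tau$ parts at the endpoints. This should reproduce $\rho_\gamma(z)$, and the $E_2$ part gives $2\Psi(\gamma)/z$, matching \eqref{eq:linking-trans}.

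The main obstacle is the regularization in this step. In weight $2$ the naive hyperbolic Eisenstein series does not converge, so the $\dd s$ integral must be regularized, presumably via the $s\to1$ limit or Hecke's trick as in \cite{Matsusaka2024}. I also need to show that the combination $\dd s-\frac{\pi i}{3}E_2^*\,\dd\tau$ is exactly what removes the nonholomorphic pieces. The $\frac{3}{\pi y}$ term in $E_2^*$ is designed to cancel the $\dd s=|\dd\tau|/\Im\tau$ nonholomorphicity against its holomorphic projection. I would first verify this cancellation on a single geodesic $S_h$, computing the primitive of $\frac{\dd s}{\dd\tau}$ along a semicircle explicitly. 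After that I would check that summing the boundary terms over $h$ gives precisely \eqref{eq:rho-gamma}, using \cref{prop:modular-cocycle} to control the cocycle relations.
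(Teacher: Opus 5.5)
Your uniqueness argument is fine. The skeleton of your existence argument is also viable: the Asai--Kaneko--Ninomiya kernel $K(z,\tau)$, continuation across the walls $\Gamma S_\gamma$ with jump corrections, then reading off $F|_2S-F$. This is the direct contour-deformation route the paper only mentions in a remark; the paper instead takes existence from Knopp's theorem and identifies the coefficients by integrating $j_m(\widetilde F_\gamma+\calB_\gamma)$ over $\partial\calF_Y$. But the decisive step is missing.

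Since $\mu\coloneqq\dd s-\frac{\pi i}{3}E_2^*(\tau)\,\dd\tau$ is not a holomorphic differential, the jump of $\int_{\overline S_\gamma}K(z,\tau)\,\mu$ across $S_h$ is not a residue. By Plemelj it is the density $\mu/\dd\tau$, which is defined only on $S_h$. To subtract the jumps and get a holomorphic function on $\bbH$, and then to see how the corrections transform under $S$, you need a holomorphic extension of this density to the whole half-disk $B_h$ with a clean $\Gamma$-equivariance. This is the paper's key lemma. On $S_h$ one has $\mu=-\omega_h(\tau)\,\dd\tau$, where $\omega_h(\tau)=\frac{2}{\tau-w_h^+}+\frac{\pi i}{3}E_2(\tau)$ is holomorphic on $\bbH$ and satisfies $\omega_h|_2\sigma=\omega_{\sigma^{-1}h\sigma}$. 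This follows from $\partial_\tau Q_h/Q_h=-i/y$ and $\sqrt{D_\gamma}/Q_h+\partial_\tau Q_h/Q_h=2/(\tau-w_h^+)$ on the axis, with $Q_h=Q_h(\tau,1)$.

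Your description of the role of $3/(\pi y)$ is off. The form $\dd s$ alone already extends holomorphically, to $-\sqrt{D_\gamma}\,\dd\tau/Q_h(\tau,1)$; this has poles at both endpoints and yields only the symmetrized cocycle. The $3/(\pi y)$ part of $E_2^*$ equals $\frac{3i}{\pi}\partial_\tau Q_h/Q_h$ on the axis. It is what cancels the pole at $w_h^-$ and doubles the one at $w_h^+$, while the leftover $E_2(\tau)$ compensates the non-invariance of $2/(\tau-w_h^+)$.

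With $\omega_h$ in hand your route closes without any regularization. The function $G(z)\coloneqq-\int_{\overline S_\gamma}K(z,\tau)\,\omega_\gamma(\tau)\,\dd\tau$ is a locally holomorphic weight-$2$ modular form off $\Gamma S_\gamma$ that jumps by $\epsilon_h\omega_h$ on entering $B_h$. Hence $G-\sum_h\epsilon_h\chi_h\omega_h$ is holomorphic on $\bbH$, the sum being locally finite by the height bound. It equals $\sum_{m\ge0}a_\gamma(m)q^m$ near $i\infty$, where $a_\gamma(m)=-\int_{\overline S_\gamma}j_m\omega_\gamma\,\dd\tau$, and its $S$-period is $\sum_{w_h^-w_h^+<0}\epsilon_h\omega_h(z)$. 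Note that a single separating geodesic contributes $\epsilon_h\omega_h(z)$, not $2\sgn(w_h^+)/(z-w_h^+)$. The $E_2(z)$ pieces cancel only in the sum, because $h\mapsto ShS^{-1}$ swaps the separating geodesics with $\epsilon_h=1$ and those with $\epsilon_h=-1$. By contrast, the route you call cleaner, unfolding into a weight-$2$ hyperbolic Eisenstein series, is exactly where you leave the regularization open. So as written, the central step is not carried out.

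Finally, the kernel must be $K(z,\tau)-E_2(z)$, not $K(z,\tau)+1-E_2(z)$. The integral $\int_{\overline S_\gamma}E_2^*\,\dd\tau=\Psi(\gamma)$ is real, not $i$ times a multiple of $\Psi(\gamma)$. So your expression has constant term $\ell_\gamma-\frac{\pi i}{3}\Psi(\gamma)$, and that extra constant would add $-\frac{\pi i}{3}\Psi(\gamma)(z^{-2}-1)$ to the period function. With $K-E_2$ you get exactly $F_\gamma+\frac{\pi i}{3}\Psi(\gamma)E_2$, matching the paper.
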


Averaging over the two geodesic directions recovers the earlier construction:
\[
	\frac{1}{2} \bigg(F_\gamma^{\mathrm{lk}}(z) + F_{\gamma^{-1}}^{\mathrm{lk}}(z) \bigg) = \sum_{m=0}^\infty \left(\int_{\overline{S}_\gamma} j_m(z)\, \dd s \right) q^m = F_\gamma^{\mathrm{sym}}(z).
\]
The following theorem refines \eqref{eq:sym-linking}.

\begin{theorem}\label{thm:B}
	Let $\sigma \in \Gamma$ be primitive hyperbolic with $\tr(\sigma) > 2$ and not conjugate to $\gamma$. Then, for any $z_0\in\bbH$,
	\[
		\mathrm{Lk}(C_\gamma, C_\sigma) = \frac{1}{2\pi} \Im \lim_{N \to \infty} \int_{\sigma^N z_0}^{\sigma^{N+1} z_0} F_\gamma^{\mathrm{lk}}(z)\, \dd z.
	\]
\end{theorem}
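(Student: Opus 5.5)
The plan is to reduce to two ingredients: a homological expression of the linking number as a pairing of $C_\gamma$ against a cocycle attached to $C_\sigma$, and a computation of the homogenized cycle integral of a modular integral in terms of its period function. Note first that $\mathrm{Lk}(C_\gamma,C_\sigma)$ is well defined in $S^3$, whereas in $\Gamma\backslash\SL_2(\R)$ the class $C_\gamma$ is not null-homologous; the correction $2\Psi(\gamma)/z$ in~\eqref{eq:linking-trans} should account for exactly this. Indeed, $2\pi i E_2$ has period function $12/z$, so $F_\gamma^{\mathrm{lk}}$ is, up to a modular function, the sum of a modular integral for $\rho_\gamma$ and $\frac{\Psi(\gamma)}{6}\cdot 2\pi i E_2$, and by~\eqref{eq:classical-hcyc} the second term contributes $\frac{1}{6}\Psi(\gamma)\Psi(\sigma)=\frac16\mathrm{Lk}(C_\gamma,\calT)\mathrm{Lk}(C_\sigma,\calT)$. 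This is the kind of correction expected when passing from linking in $\Gamma\backslash\SL_2(\R)$ (where $H_1 \cong \Z/12$ after rationalization) to linking in $S^3$.

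\textbf{Step 1.} Define the functional $I(F;\sigma)\coloneqq \frac{1}{2\pi}\Im\lim_{N}\int_{\sigma^Nz_0}^{\sigma^{N+1}z_0}F\,\dd z$. Show that it is independent of $z_0$ and of the representative of the conjugacy class of $\sigma$. Show also that, for a modular integral $F$ with period function $\rho$, it depends only on $\rho$ modulo the contribution of genuine modular functions. Writing $\sigma$ as a word in $T$ and $S$ via its reduced cycle, the homogenized integral becomes a finite sum of values of the Eichler-type cocycle $\int_{z_0}^{\sigma z_0}F$. As $N\to\infty$ the base point tends to the attracting fixed point $w_\sigma^+$. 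The answer is then computed by summing, over the cyclic shifts $\sigma'$ of $\sigma$, the contributions of $\rho$ evaluated via its residues at $w_{\sigma'}^{\pm}$. For $\rho=\rho_\gamma$, each term $\frac{\sgn(w_h^+)}{z-w_h^+}$ contributes $\pm\frac12$, through $\Im\log$ of a branch, whenever the geodesic from $w_\sigma^-$ to $w_\sigma^+$ crosses a translate of the geodesic $(w_h^-,w_h^+)$. This gives a signed intersection count between the lifted geodesics of $\gamma$ and $\sigma$ in $\bbH$, restricted to one fundamental period.

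\textbf{Step 2.} Compare this count with a known formula for $\mathrm{Lk}(C_\gamma,C_\sigma)$. There are two routes. The first is to quote Simon's formula~\cite{Simon2025}, or the symmetrized result~\eqref{eq:sym-linking} of~\cite{DIT2017}, and match terms. The second is to prove the formula directly: take a Seifert-type $2$-chain in $S^3$ bounded by $C_\gamma$, namely the union of the "modular surface" swept by the fibres over $\overline{S}_\gamma$ together with a multiple of a surface bounded by $\calT$. Its intersection with $C_\sigma$ consists of transverse crossings of $\overline S_\sigma$ with $\overline S_\gamma$, weighted by orientation and fibre position, plus the trefoil term $\Psi(\gamma)\Psi(\sigma)/6$. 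I would take the first route: use~\eqref{eq:sym-linking} to control the symmetric part, and then show that the antisymmetric part $\frac12(F_\gamma^{\mathrm{lk}}-F_{\gamma^{-1}}^{\mathrm{lk}})$ gives $\frac12(\mathrm{Lk}(C_\gamma,C_\sigma)-\mathrm{Lk}(C_{\gamma^{-1}},C_\sigma))$.

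\textbf{Main obstacle.} The hardest step is this antisymmetric part, which is precisely what was not treated in~\cite{DIT2017}. I would first try to express it through $\Psi$: both sides should be linear in $\Psi(\gamma)\Psi(\sigma)$ and odd under $\gamma\mapsto\gamma^{-1}$, since $\Psi(\gamma^{-1})=-\Psi(\gamma)$. One would then check on a generating family, for instance $\sigma=T^aS$-type words, that the residue count of Step 1 reproduces Ghys-type crossing signs. Matching the branch choices of $\Im\log$ with the orientation conventions of $\mathrm{Lk}$ is where I expect the real difficulty.
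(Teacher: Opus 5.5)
Your proposal has a genuine gap, and it sits where you place the main obstacle. The part of $\mathrm{Lk}(C_\gamma,C_\sigma)$ that is antisymmetric under $\gamma\mapsto\gamma^{-1}$ is the whole new content of \cref{thm:B}, and the mechanism you propose for it is false: that part is not a multiple of $\Psi(\gamma)\Psi(\sigma)$. Take $\gamma=\matL^2\matR$. For $\sigma=\matL^3\matR$, the paper's example gives $\mathrm{Lk}(C_\gamma,C_\sigma)-\mathrm{Lk}(C_{\gamma^{-1}},C_\sigma)=-1$ with $\Psi(\gamma)\Psi(\sigma)=2$. For $\sigma=\matL^4\matR$ one finds $X_{\sigma,\matL}=\{(2\sqrt{2}-1)/7,\,1/\sqrt{8},\,(1+2\sqrt{2})/7,\,(1+\sqrt{2})/2\}$. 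Hence $C_{\matR\matL}(\gamma,\sigma)=2$ and $C_{\matR\matL}(\gamma^{-1},\sigma)=1$, so by Simon's formula the difference is again $-1$, although now $\Psi(\gamma)\Psi(\sigma)=3$.

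Checking a ``generating family'' of $\sigma$ could not settle the matter anyway. Primitive hyperbolic classes do not form a group, and $\sigma\mapsto R_\gamma(\sigma,w_\sigma^{+})$ is not additive; for $2\pi iE_2$ its normalized imaginary part is the quasimorphism $\Psi$.

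Your Step~1 also misdescribes its output. The poles of $r_\gamma(\delta,\cdot)$ come from axes crossing the geodesic from $\delta^{-1}i\infty$ to $i\infty$, not $S_\sigma$. The $\Im\log$ terms record on which side of $w_h^+$ the evaluation point lies. So the result is not an intersection count of $\overline{S}_\gamma$ with $\overline{S}_\sigma$: for $\sigma=\matL^3\matR$ the normalized homogenized integrals of $F_\gamma$ and $F_{\gamma^{-1}}$ are $-7/3$ and $-2/3$, neither equal nor opposite. Note also that \eqref{eq:sym-linking} is symmetrized in $\sigma$ as well, so even your symmetric part needs an extra argument.

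What is missing is an exact evaluation of Step~1 for the unsymmetrized $F_\gamma$, which is what the paper carries out.
\begin{itemize}
\item Write $\sigma=t_1\cdots t_n$ with $t_i\in\{\matL,\matR\}$ rather than in $T$ and $S$. Then the homogenized integral equals $\sum_i R_\gamma(t_i,v_iw_\sigma^{+}+i0)$, and the points $v_iw_\sigma^{+}$ run over $X_{\sigma,t_i}$.
\item From $R_\gamma(T,z)=a_\gamma(0)$ and $\Im a_\gamma(0)=-\pi\Psi(\gamma)/3$ one gets $\frac{1}{2\pi}\Im R_\gamma(\matR,t+i0)=-\Psi(\gamma)/6$.
\item From the closed form $R_\gamma(S,z)=2\sum_{x\in X_\gamma}(\log(z-x)-\log(1+xz))-\pi i\,\#X_\gamma$ and $ST^{-1}S=-\matL$ one gets $\frac{1}{2\pi}\Im R_\gamma(\matL,t+i0)=\Psi(\gamma)/6-\#\{x\in X_{\gamma,\matR}:x<t\}$.
\item Summing gives $-C_{\matR\matL}(\gamma,\sigma)-\frac{1}{6}\Psi(\gamma)\Psi(\sigma)$. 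This compares attracting fixed points of cyclic shifts sorted by last letter, which is Lorenz-braid data rather than geodesic-intersection data.
\item This is matched with Simon's crossing count $\frac{1}{2}(C_{\matR\matL}+C_{\matL\matR})$. That count reduces to $C_{\matR\matL}$ by the identity $C_{\matR\matL}=C_{\matL\matR}$, proved with the last-letter permutation $\varphi_\gamma\times\varphi_\sigma$, giving $\mathrm{Lk}(C_\gamma,C_\sigma)=-C_{\matR\matL}(\gamma,\sigma)$.
\item The $E_2$-term then contributes $+\frac{1}{6}\Psi(\gamma)\Psi(\sigma)$, exactly as you observed.
\end{itemize}
You listed Simon's formula as an option, but the symmetric/antisymmetric split you chose instead discards precisely the orientation-sensitive information that the $\matL\matR$ bookkeeping retains.
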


The remainder of the paper is organized as follows. In \cref{sec2}, we prove \cref{thm:A}. A key ingredient is the differential on $\overline{S}_\gamma$ induced by the holomorphic differential $\omega_\gamma(z)\,\dd z$ constructed in \cref{lem:omega-h}. It leads to the modular integral and to explicit cycle-integral formulas for its Fourier coefficients. In \cref{sec3}, we derive a combinatorial formula for the homogenized cycle integrals and compare it with Simon's linking number formula, yielding the proof of \cref{thm:B}.

\section{Rational period functions and modular integrals of weight $2$}\label{sec2}

For the construction of modular integrals, it is convenient to keep track of their transformation terms under every element of $\Gamma$, rather than only under $T \coloneqq \smat{1 & 1 \\ 0 & 1}$ and $S$. A \emph{rational cocycle of weight $2$} is a map $\sigma \mapsto r(\sigma, \cdot)$ from $\Gamma$ to the space of rational functions whose finite poles lie on $\R$, satisfying
\[
	r(\sigma_1 \sigma_2, z) = (r(\sigma_1, \cdot)|_2 \sigma_2)(z) + r(\sigma_2, z), \qquad \sigma_1, \sigma_2 \in \Gamma.
\]
We impose the \emph{strongly parabolic} normalization $r(T, z) = 0$ following~\cite[Section~2]{DIT2017}. To relate these cocycles to rational period functions, put $\rho(z) \coloneqq r(S, z)$. The cocycle identity gives $r(I,z) = 0$. Since $-I$ acts trivially in weight $2$, it also gives $2r(-I, z) = r(I, z) = 0$. Moreover, $U = TS$ and $r(T, z) = 0$ imply $r(U, z) = \rho(z)$. The relations $S^2 = U^3 = -I$ therefore give
\[
	\rho|_2(1+S)(z) = r(S^2, z) = 0, \qquad \rho|_2(1+U+U^2)(z) = r(U^3, z) = 0.
\]
Thus $\rho$ is a rational period function of weight $2$. Conversely, a rational period function $\rho$ determines $r$ uniquely, since $S$ and $T$ generate $\Gamma$ and $r(T, z) = 0$. Under this correspondence, a modular integral $F$ with $\rho$ satisfies
\[
	(F|_2 \sigma)(z) - F(z) = r(\sigma, z), \qquad \sigma \in \Gamma.
\]

\subsection{Modular cocycles $r_\gamma(\sigma,z)$}\label{sec2-1}

Fix a primitive hyperbolic $\gamma \in \Gamma$ with $\tr(\gamma) > 2$, where primitive means that $\gamma$ is not a proper power in $\Gamma$. For a hyperbolic $h = \smat{a & b \\ c & d} \in \Gamma$ with $\tr(h) > 2$, write
\[
	Q_h(X, Y) \coloneqq cX^2 + (d-a) XY - bY^2, \qquad D_h \coloneqq \tr(h)^2 - 4, \qquad \epsilon_h \coloneqq \sgn(c).
\]
Here $D_h$ is the discriminant of $Q_h$, equal to $D_\gamma$ for $h \sim \gamma$. Hyperbolicity forces $c \neq 0$, since $c = 0$ would give $ad = 1$ and $|\tr(h)| = 2$. Write $w_h > w'_h$ for the fixed points in decreasing order. The attracting point $w_h^+$ is $w_h$ for $\epsilon_h = 1$ and $w'_h$ for $\epsilon_h = -1$. The axis $S_h$ is oriented from $w_h^-$ to $w_h^+$. For primitive $h$, the stabilizer $\Gamma_h \coloneqq \{g \in \Gamma : g w_h^\pm = w_h^\pm\}$ is given by $\Gamma_h = \{\pm h^n : n \in \Z\}$. We index conjugates $h = g^{-1} \gamma g$ by $g \in \Gamma_\gamma \backslash \Gamma$. We use the right action of $\Gamma$ on binary quadratic forms given by $(Q \circ \smat{a & b\\ c & d})(X, Y) \coloneqq Q(aX+bY, cX+dY)$. This action is compatible with conjugation, that is, $Q_{g^{-1} \gamma g} = Q_\gamma \circ g$ (see also~\cite[Section~2]{Matsusaka2024}). 

For $\sigma = \smat{a & b \\ c & d} \in \Gamma$ with $c \neq 0$, let
\[
	I_\gamma(\sigma) \coloneqq \{g \in \Gamma_\gamma \backslash \Gamma : w'_{g^{-1}\gamma g} < -d/c < w_{g^{-1} \gamma g}\}.
\]
This indexes the axes crossing the geodesic from $\sigma^{-1} i\infty$ to $i\infty$. For $c = 0$, we set $I_\gamma(\sigma) \coloneqq \varnothing$.

\begin{lemma}\label{lem:epsilon-prop}
	The crossing set $I_\gamma(\sigma)$ is finite, and for $h = g^{-1} \gamma g$, we have
	\[
		\epsilon_{\sigma h \sigma^{-1}} \epsilon_h = \begin{cases}
			-1 &\text{if } g \in I_\gamma(\sigma),\\
			+1 &\text{if } g \not\in I_\gamma(\sigma).
		\end{cases}
	\]
	Moreover,
	\begin{align}\label{eq:sign-cancel}
		\sum_{g \in I_\gamma(\sigma)} \epsilon_{g^{-1}\gamma g} = 0.
	\end{align}
\end{lemma}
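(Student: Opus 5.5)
Three things need proof: finiteness of $I_\gamma(\sigma)$, the sign-flip criterion, and the cancellation \eqref{eq:sign-cancel}. We treat them in that order, all through the quadratic forms $Q_h$.

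\textbf{Finiteness.} Put $x_0 = -d/c = \sigma^{-1} i\infty \in \mathbb{Q}$. The condition $w'_h < x_0 < w_h$ means that $Q_h(x_0,1)$ has sign opposite to $\epsilon_h = \sgn(c_h)$, since the leading coefficient of $Q_h$ is $c_h$. Equivalently, $[c_h,\, d_h-a_h,\, -b_h]$ is a form of discriminant $D_\gamma$ with $Q_h(x_0,1)\,c_h<0$. Write $x_0 = \sigma^{-1}\infty$ and replace $h$ by $\sigma h \sigma^{-1}$, using $Q_{\sigma h\sigma^{-1}} = Q_h\circ\sigma^{-1}$. Then the condition becomes that $Q_{\sigma h \sigma^{-1}}$ has $\infty$ strictly between its roots in the projective sense, i.e.\ its ``$a$''-coefficient has the opposite sign to what it should be. It is cleaner to argue directly: forms $[A,B,C]$ of fixed discriminant with $AC<0$ (after moving $x_0$ to $0$ by the translation-free part of $\sigma$) are finite in number, because $B^2 - 4AC = D$ forces $|B|<\sqrt D$ and $|AC|\le D/4$. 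So translate so that $x_0\mapsto 0$ using $\sigma$ itself, noting that $\sigma$ sends $x_0$ to $\infty$ and $S\sigma$ sends it to $0$. The condition ``roots on either side of $x_0$'' becomes ``roots of opposite sign'', i.e.\ $AC<0$, and finiteness follows. Each such form corresponds to at most one coset $g$, since $\Gamma_\gamma = \{\pm\gamma^n\}$ is exactly the stabilizer of $Q_\gamma$ up to sign.

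\textbf{Sign flip.} Compute $c_{\sigma h\sigma^{-1}}$. The lower-left entry of $\sigma h \sigma^{-1}$ equals the value of $Q_h$ at the column $(-c,a)$ of $\sigma^{-1}$, up to sign. Concretely,
\[
c_{\sigma h \sigma^{-1}} = -Q_h(d,-c) = -c^2 Q_h(-d/c,1)
\]
(we will check this sign convention by a direct $2\times2$ multiplication). Hence $\epsilon_{\sigma h\sigma^{-1}} = -\sgn Q_h(x_0,1)$. On the other hand, $\sgn Q_h(x_0,1) = -\epsilon_h$ exactly when $x_0$ lies between the roots, and $=\epsilon_h$ otherwise; the value $0$ is excluded since the roots are irrational quadratic and $x_0$ is rational. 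This gives the stated product.

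\textbf{Cancellation.} This is the main obstacle. We use an involution on $I_\gamma(\sigma)$: either $h \mapsto \sigma h\sigma^{-1}$ composed with something, or the involution on forms $[A,B,C]\mapsto[-A,B,-C]$. The latter is not induced by $\Gamma$, so it is the wrong choice. Instead, observe that after moving $x_0$ to $0$, the crossing forms are $\{[A,B,C]\sim Q_\gamma : AC<0\}$ and $\epsilon$ is $\sgn A$ up to a fixed global sign. Then apply $S$, which acts as $[A,B,C]\mapsto[C,-B,A]$. This preserves the equivalence class, preserves $AC<0$, and swaps $\sgn A$ with $\sgn C=-\sgn A$. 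It is fixed-point-free, since a fixed point would need $A=C$, contradicting $AC<0$. So the terms cancel in pairs.

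The delicate point is that the normalization sending $x_0\mapsto 0$ must be an element of $\Gamma$ (take $S\sigma$, so the conjugated set is $I_\gamma(S\sigma)$ up to the bookkeeping above). We also need that conjugation by a fixed element permutes the $\epsilon$'s only by a globally controlled sign. We will justify this using the flip formula just proved, applied to the element $S$, for which $x_0=0$ itself.
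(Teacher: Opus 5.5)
Your argument for the cancellation \eqref{eq:sign-cancel} has a genuine gap. The involution $Q\mapsto Q\circ S=[C,-B,A]$ does prove the identity for $\sigma=S$, or for any $\sigma$ with $-d/c=0$. There $I_\gamma(S)$ is exactly the set of forms with $AC<0$, $\epsilon_h=\sgn A$, and the pairing is fixed-point-free and reverses $\epsilon$. But nothing transports this to a general $\sigma$:
\begin{itemize}
\item The element $S\sigma$ sends $x_0=-d/c$ to $0$, but it sends $\infty$ to $-c/a$, not to $\infty$. So ``$x_0$ and $\infty$ separate the roots'' becomes ``$0$ and $-c/a$ separate the roots'', not $AC<0$.
\item For $|c|>1$ no element of $\Gamma$ maps $\{x_0,\infty\}$ to $\{0,\infty\}$. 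The determinant of the primitive vectors $(-d,c)$, $(1,0)$ is $\pm c$, while for $(0,1)$, $(1,0)$ it is $\pm 1$, and $\Gamma$ preserves this determinant up to sign.
\item Your claim that conjugation by a fixed element changes the $\epsilon$'s only by a global sign contradicts the flip criterion you just proved. Conjugation by $M$ reverses $\epsilon_h$ exactly on the finite set indexed by $I_\gamma(M)$ and leaves it unchanged elsewhere.
\end{itemize}

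The same faulty normalization invalidates your finiteness argument. A correct version uses the rational shift $Q_h(X+x_0Y,Y)=[A,\,2Ax_0+B,\,Q_h(x_0,1)]$. Since $c^2Q_h(x_0,1)\in\Z\setminus\{0\}$, the condition $A\,Q_h(x_0,1)<0$ forces $|A|\le c^2D_\gamma/4$ and $|2Ax_0+B|<\sqrt{D_\gamma}$, leaving finitely many forms.

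There is also a sign slip in the flip step. Direct multiplication gives $c_{\sigma h\sigma^{-1}}=+Q_h(d,-c)$, which is consistent with your own formula $[A,B,C]\mapsto[C,-B,A]$. With your minus sign, the criterion would come out reversed.

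The missing idea is additivity in $\sigma$. By the flip criterion,
\[
\nu(\sigma)\coloneqq\sum_{g\in I_\gamma(\sigma)}\epsilon_{g^{-1}\gamma g}=\frac12\sum_{h\sim\gamma}(\epsilon_h-\epsilon_{\sigma h\sigma^{-1}}),
\]
a sum with finitely many nonzero terms. Inserting $\pm\epsilon_{\sigma_2h\sigma_2^{-1}}$ and reindexing $h\mapsto\sigma_2h\sigma_2^{-1}$ gives $\nu(\sigma_1\sigma_2)=\nu(\sigma_1)+\nu(\sigma_2)$. So $\nu$ is a homomorphism $\Gamma\to\Z$, and it vanishes because $\Gamma$ is generated by the torsion elements $S$ and $U$; this is the paper's proof. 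Alternatively, you can salvage your own work: your $S$-involution gives $\nu(S)=0$, $I_\gamma(T)=\varnothing$ gives $\nu(T)=0$, and additivity finishes the proof since $S$ and $T$ generate $\Gamma$.
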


\begin{proof}
	The finiteness and the sign criterion follow from \cite[Proposition~2.6 and its proof]{Matsusaka2024}. For the last assertion, put
	\[
		\nu(\sigma) \coloneqq \frac{1}{2} \sum_{h \sim \gamma} (\epsilon_h - \epsilon_{\sigma h \sigma^{-1}}) = \sum_{g \in I_\gamma(\sigma)} \epsilon_{g^{-1} \gamma g}.
	\]
	Then, $\nu$ defines a homomorphism $\Gamma \to \Z$ by reindexing. Since $\Gamma$ is generated by the finite-order elements $S$ and $U$, we have $\nu = 0$, proving \eqref{eq:sign-cancel}.
\end{proof}

\begin{definition}\label{def:r-gamma}
	The (unsymmetrized) modular cocycle attached to $\gamma$ is
	\[
		r_\gamma(\sigma, z) \coloneqq 2 \sum_{g \in I_\gamma(\sigma)} \frac{\epsilon_{g^{-1}\gamma g}}{z- w_{g^{-1}\gamma g}^+}.
	\]
\end{definition}

\begin{proposition}\label{prop:modular-cocycle}
	The function $r_\gamma$ is a strongly parabolic cocycle of weight $2$.
\end{proposition}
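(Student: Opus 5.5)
The plan is to rewrite $r_\gamma$ as a formal coboundary, made finite by \cref{lem:epsilon-prop}, and then check the cocycle identity by a telescoping computation.

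\textbf{Step 1: reformulation.} For $h = g^{-1}\gamma g$, the sign criterion in \cref{lem:epsilon-prop} gives
\[
\epsilon_h - \epsilon_{\sigma h\sigma^{-1}} =
\begin{cases}
2\epsilon_h & \text{if } g \in I_\gamma(\sigma),\\
0 & \text{otherwise.}
\end{cases}
\]
Hence
\[
	r_\gamma(\sigma, z) = \sum_{h \sim \gamma} \frac{\epsilon_h - \epsilon_{\sigma h \sigma^{-1}}}{z - w_h^+},
\]
and the sum is finite because $I_\gamma(\sigma)$ is finite. Every pole $w_h^+$ is real and finite, since $c \neq 0$ for hyperbolic $h$. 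For $\sigma = T$ (or $\pm T^n$) we have $c=0$, so $I_\gamma(T) = \varnothing$ and $r_\gamma(T,z) = 0$. This gives the strongly parabolic normalization. The reformulation also shows that $r_\gamma(\sigma,\cdot)$ depends only on $\pm\sigma$, because conjugation by $-\sigma$ and by $\sigma$ agree.

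\textbf{Step 2: slashing a simple pole.} Let $\sigma = \smat{a&b\\c&d}$ and let $w \in \R$ be such that $\sigma w$ is finite. From $\sigma z - \sigma w = (z-w)/((cz+d)(cw+d))$ we get
\[
	(cz+d)^{-2}\,\frac{1}{\sigma z - \sigma w} = \frac{1}{z-w} - \frac{c}{cz+d}.
\]
Replacing $w$ by $\sigma^{-1}w$, this reads $\bigl(\tfrac{1}{\cdot - w}\bigr)\big|_2\sigma = \tfrac{1}{z-\sigma^{-1}w} - \tfrac{c}{cz+d}$. We also use $w^+_{\sigma h \sigma^{-1}} = \sigma w^+_h$. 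This holds because the attracting fixed point is intrinsic to the dynamics of $h$ and is transported by conjugation. We also use that $\sigma h\sigma^{-1}$ is again hyperbolic, so $\sigma w_h^+$ is finite.

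\textbf{Step 3: cocycle identity.} Write $\sigma_2 = \smat{*&*\\c_2&d_2}$. Apply Step 2 termwise to the finite sum for $r_\gamma(\sigma_1,\cdot)$:
\[
	r_\gamma(\sigma_1,\cdot)|_2\sigma_2 = \sum_{h}(\epsilon_h - \epsilon_{\sigma_1 h\sigma_1^{-1}})\Bigl(\frac{1}{z - \sigma_2^{-1}w_h^+} - \frac{c_2}{c_2z+d_2}\Bigr).
\]
The second term carries the total coefficient $\sum_h(\epsilon_h-\epsilon_{\sigma_1h\sigma_1^{-1}}) = 2\sum_{g\in I_\gamma(\sigma_1)}\epsilon_{g^{-1}\gamma g}$. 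This vanishes by \eqref{eq:sign-cancel}, which is exactly the role of that identity. In the first term, substitute $h = \sigma_2 k\sigma_2^{-1}$, so that $\sigma_2^{-1}w_h^+ = w_k^+$. This gives
\[
\sum_k \frac{\epsilon_{\sigma_2k\sigma_2^{-1}} - \epsilon_{\sigma_1\sigma_2 k(\sigma_1\sigma_2)^{-1}}}{z-w_k^+}.
\]
Adding $r_\gamma(\sigma_2,z) = \sum_k (\epsilon_k - \epsilon_{\sigma_2k\sigma_2^{-1}})/(z-w_k^+)$ telescopes to $r_\gamma(\sigma_1\sigma_2,z)$. All sums involved are finite, so the rearrangements are legitimate.

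The main obstacle is the non-rational correction term $c_2/(c_2z+d_2)$ produced by slashing a simple pole in weight $2$. It must cancel for the cocycle to consist of rational functions with only real poles, and this is precisely what \eqref{eq:sign-cancel} supplies. The remaining care is bookkeeping: checking that the reindexing over conjugates $h \sim \gamma$, equivalently over $\Gamma_\gamma\backslash\Gamma$, is a bijection, and that $w^+$ transforms equivariantly.
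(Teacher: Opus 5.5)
Your proposal is correct and matches the paper's proof step for step. Both arguments use the same four ingredients:
\begin{itemize}
\item rewriting $r_\gamma(\sigma,z)$ as the finitely supported sum $\sum_{h\sim\gamma}(\epsilon_h-\epsilon_{\sigma h\sigma^{-1}})/(z-w_h^+)$;
\item the weight-$2$ slash identity for a simple pole, whose $-c/(cz+d)$ term is cancelled by \eqref{eq:sign-cancel};
\item the reindexing $h\mapsto\sigma_2 h\sigma_2^{-1}$ followed by telescoping;
\item $I_\gamma(T)=\varnothing$ for the strongly parabolic normalization.
\end{itemize}
The extra details you supply, such as the finiteness of $\sigma_2^{-1}w_h^+$ and the equivariance $w^+_{\sigma h\sigma^{-1}}=\sigma w_h^+$, are points the paper uses only implicitly.
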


\begin{proof}
	By \cref{lem:epsilon-prop},
	\begin{align}\label{eq:r-reexp}
		r_\gamma(\sigma, z) = \sum_{h \sim \gamma} \frac{\epsilon_h - \epsilon_{\sigma h \sigma^{-1}}}{z - w_h^+},
	\end{align}
	with only finitely many nonzero terms. For $g = \smat{a&b\\c&d} \in \Gamma$, conjugation carries the attracting fixed point of $h$ to that of $g^{-1} h g$, so $w_{g^{-1} h g}^+ = g^{-1} w_h^+$. Hence
	\[
		\left(\frac{1}{z-w_h^+}\right)\Big|_2g =  \frac{1}{z-w_{g^{-1}hg}^+}-\frac{c}{cz+d}.
	\]
	Using this identity, taking $g = \sigma_2$, reindexing $h\mapsto \sigma_2 h \sigma_2^{-1}$, and \eqref{eq:sign-cancel}, we obtain
	\begin{align*}
		(r_\gamma(\sigma_1,\cdot)|_2\sigma_2)(z) &= \sum_{h\sim\gamma} \frac{\epsilon_{\sigma_2h\sigma_2^{-1}} - \epsilon_{\sigma_1\sigma_2h(\sigma_1\sigma_2)^{-1}}}{z-w_h^+}.
	\end{align*}
	Adding
	\[
		r_\gamma(\sigma_2,z) = \sum_{h\sim\gamma} \frac{\epsilon_h-\epsilon_{\sigma_2h\sigma_2^{-1}}}{z-w_h^+}
	\]
	gives $r_\gamma(\sigma_1\sigma_2,z)$. Finally, $I_\gamma(T)=\varnothing$ gives $r_\gamma(T,z)=0$.
\end{proof}

In particular, $\rho_\gamma(z) = r_\gamma(S,z)$ defined in~\eqref{eq:rho-gamma} is a rational period function of weight $2$. Replacing $\gamma$ by $\gamma^{-1}$ exchanges the attracting and repelling fixed points and changes $Q_h$ to $-Q_h$. Hence,
\[
	r_\gamma^{\mathrm{sym}}(\sigma, z) \coloneqq \frac{1}{2} \bigg(r_\gamma(\sigma, z) + r_{\gamma^{-1}}(\sigma, z)\bigg) = \sqrt{D_\gamma} \sum_{g \in I_\gamma(\sigma)} \frac{\epsilon_{g^{-1} \gamma g}}{Q_{g^{-1} \gamma g}(z,1)}.
\]
This is the modular cocycle of~\cite{DIT2017}.

\subsection{Cycle integrals}\label{sec2-2}

For $\sigma = \smat{a & b \\ c & d} \in \Gamma$, the Eisenstein series satisfy
\begin{align}\label{eq:Eisen-trans}
	(E_2|_2 \sigma)(z) - E_2(z) = \frac{6}{\pi i} \frac{c}{cz+d}, \qquad E_2^*|_2 \sigma = E_2^*.
\end{align}
For primitive hyperbolic $h$ with $\tr(h) > 2$, the invariant differential $E_2^*(z)\, \dd z$ therefore descends to the oriented circle $\overline{S}_h = \Gamma_h \backslash S_h$. Its integral, taken along one fundamental segment of $S_h$, gives the Rademacher symbol (see~\cite[Theorem~B~(2)]{MatsusakaUeki2023}):
\[
	\Psi(h) = \int_{\overline{S}_h} E_2^*(z) \,\dd z.
\]
Unlike~\eqref{eq:classical-hcyc}, this formula requires no homogenization. We next use $E_2$ to construct a holomorphic differential $\omega_h(z)\, \dd z$ on $\bbH$ whose restriction to $S_h$ descends to $\overline{S}_h$.

\begin{lemma}\label{lem:omega-h}
	For the above $h$, the holomorphic function
	\[
		\omega_h(z) \coloneqq \frac{2}{z-w_h^+} + \frac{\pi i}{3} E_2(z)
	\]
	satisfies $\omega_h|_2 \sigma = \omega_{\sigma^{-1} h \sigma}$ for every $\sigma \in \Gamma$.
\end{lemma}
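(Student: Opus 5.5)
The plan is a direct computation: apply the weight $2$ slash action to each of the two summands of $\omega_h$ separately and check that the non-modular correction terms cancel. Write $\sigma = \smat{a & b \\ c & d}$. The slash action is linear, so
\[
	(\omega_h|_2\sigma)(z) = 2\left(\frac{1}{z-w_h^+}\right)\Big|_2\sigma + \frac{\pi i}{3}(E_2|_2\sigma)(z).
\]

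For the first summand I use the identity from the proof of \cref{prop:modular-cocycle}, with $g = \sigma$:
\[
	\left(\frac{1}{z-w_h^+}\right)\Big|_2\sigma = \frac{1}{z-w^+_{\sigma^{-1}h\sigma}} - \frac{c}{cz+d}.
\]
To confirm it, write $w = w_h^+$. We have $\sigma z - w = \bigl((a-cw)z+(b-dw)\bigr)/(cz+d)$. Multiplying by $(cz+d)^{-2}$ gives
\[
	\frac{1}{(cz+d)\bigl((a-cw)z+(b-dw)\bigr)}.
\]
Partial fractions in $z$ turn this into the difference of $1/(z-\sigma^{-1}w)$ and $c/(cz+d)$. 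This uses $\det\sigma = 1$ and the fact that the root of $(a-cw)z+(b-dw)$ is $\sigma^{-1}w$; the degenerate case $a = cw$ cannot occur, since $w$ is irrational. I also use that the attracting fixed point of $\sigma^{-1}h\sigma$ is $\sigma^{-1}w_h^+$. This holds because $\sigma^{-1}h\sigma$ acting on $\sigma^{-1}w_h^+$ is just $h$ acting on $w_h^+$, so attracting and repelling points are carried over.

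For the second summand, the transformation law \eqref{eq:Eisen-trans} gives
\[
	\frac{\pi i}{3}(E_2|_2\sigma)(z) = \frac{\pi i}{3}E_2(z) + \frac{\pi i}{3}\cdot\frac{6}{\pi i}\cdot\frac{c}{cz+d} = \frac{\pi i}{3}E_2(z) + \frac{2c}{cz+d}.
\]
Adding the two summands, the term $-2c/(cz+d)$ from the first cancels the term $+2c/(cz+d)$ from the second. What remains is $\omega_{\sigma^{-1}h\sigma}(z)$, which is the claim. The case $c = 0$ also needs covering, i.e. $\sigma = \pm T^n$. There both correction terms vanish and the identity is just a translation, so it is included in the same formula.

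There is no genuine obstacle. The only point to watch is that the coefficient $\pi i/3$ is exactly what makes $2c/(cz+d)$ cancel, and that the attracting/repelling labels are preserved under conjugation.
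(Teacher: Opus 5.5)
Your proof is correct and is the same computation as the paper's: the rational term picks up $-2c/(cz+d)$ by the identity from the proof of \cref{prop:modular-cocycle}, the term $\frac{\pi i}{3}E_2$ picks up $+2c/(cz+d)$ by \eqref{eq:Eisen-trans}, and the two cancel. Your partial-fraction check, the conjugation argument that attracting fixed points are carried to attracting fixed points, and the remark on $c=0$ fill in details that the paper's one-line proof leaves implicit.
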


\begin{proof}
	The two terms acquire respectively $-2c/(cz+d)$ and $2c/(cz+d)$ under the slash operator, and these cancel.
\end{proof}

\begin{definition}
	For $m \ge 0$, set
	\[
		a_\gamma(m) \coloneqq -\int_{\overline{S}_\gamma} j_m(z) \omega_\gamma(z) \, \dd z.
	\]
\end{definition}

By \cref{lem:omega-h}, $a_\gamma(m)$ depends only on the conjugacy class of $\gamma$.

\begin{proposition}\label{prop:a-gamma}
	For every $m \ge 0$,
	\[
		a_\gamma(m) = \int_{\overline{S}_\gamma} j_m(z) \frac{|\dd z|}{\Im z} - \frac{\pi i}{3} \int_{\overline{S}_\gamma} j_m(z) E_2^*(z) \, \dd z.
	\]
	In particular, $a_\gamma(0) = \ell_\gamma - \pi i \Psi(\gamma)/3$.
\end{proposition}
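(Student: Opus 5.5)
We will split $\omega_\gamma(z) = \frac{2}{z-w_\gamma^+} + \frac{\pi i}{3}E_2(z)$ into a modular part and a remainder, using $E_2 = E_2^* + 3/(\pi\Im z)$. This gives
\[
	\omega_\gamma(z) = \frac{\pi i}{3}E_2^*(z) + \left(\frac{2}{z-w_\gamma^+} + \frac{i}{\Im z}\right).
\]
The term $-\frac{\pi i}{3}\int_{\overline{S}_\gamma} j_m E_2^*\,\dd z$ is already well defined on $\overline{S}_\gamma$, since $E_2^*\,\dd z$ and $j_m$ are $\Gamma$-invariant. It therefore remains to show that, along the oriented axis $S_\gamma$, the remainder satisfies
\[
	-\left(\frac{2}{z-w_\gamma^+} + \frac{i}{\Im z}\right)\dd z = \frac{|\dd z|}{\Im z}.
\]
Integrating against $j_m$ over a fundamental segment then gives the formula. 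The difference of the two descended differentials is itself invariant, so nothing further needs to be checked for well-definedness.

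For the key identity, write $w^\pm = w_\gamma^\pm$ and parametrize $S_\gamma$ as the semicircle with center $c_0 = (w^++w^-)/2$ and radius $R = |w^+-w^-|/2$. We use the Möbius parametrization $z(t) = (w^+e^{t}+ i w^-)/(e^{t}+i)$, or more simply an angle $\theta$ with $z = c_0 + R e^{i\theta}$, oriented from $w^-$ to $w^+$. The cleanest route is to map by a real Möbius transformation $M$ sending $w^-\mapsto 0$ and $w^+\mapsto\infty$, and to check the identity on the imaginary axis. There the claim reads as follows. For $z = iy$ with $y$ increasing (geodesic from $0$ to $i\infty$), the term $\frac{2}{z-w^+}$ becomes $0$. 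We also need to track how both sides transform, using that $\frac{2}{z-w^+}\,\dd z$ picks up $-\frac{2c\,\dd z}{cz+d}$ while $\frac{i\,\dd z}{\Im z}$ picks up the conjugate-type term $\frac{2c\,\dd z}{cz+d}$ plus a real correction. This is exactly the $E_2$ versus $E_2^*$ bookkeeping, and it shows that $\frac{2\dd z}{z-w^+}+\frac{i\dd z}{\Im z}$ transforms like $\frac{i\,\dd z}{\Im z}$ up to the $\frac{2}{z-w^+}$ relabeling.

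Instead of this transformation bookkeeping, I will just verify the identity directly with $z = c_0+Re^{i\theta}$. Take the case $w^+>w^-$, where $\theta$ decreases from $\pi$ to $0$; the other case is symmetric. Then $\dd z = iRe^{i\theta}\dd\theta$, $\Im z = R\sin\theta$, and $z - w^+ = R(e^{i\theta}-1)$. Hence
\[
	\frac{2\,\dd z}{z-w^+} = \frac{2ie^{i\theta}}{e^{i\theta}-1}\dd\theta = \left(1 + \cot(\theta/2)\, i\right)\dd\theta \cdot(\ldots),
\]
and
\[
	\frac{i\,\dd z}{\Im z} = -\frac{e^{i\theta}}{\sin\theta}\dd\theta.
\]
Using $\frac{2e^{i\theta}}{e^{i\theta}-1} = 1 - i\cot(\theta/2)$ and $\frac{e^{i\theta}}{\sin\theta} = \cot\theta + i$, the sum simplifies to the real quantity $(\cot(\theta/2) - \cot\theta)\,\dd\theta - \dd\theta\cdot(\ldots)$. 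The half-angle identity $\cot(\theta/2) - \cot\theta = 1/\sin\theta$ should make the imaginary parts cancel and leave $-\dd\theta/\sin\theta$. With $\dd\theta<0$, this equals $-|\dd z|/\Im z$, which after the overall minus sign gives the claim. Carrying out this trigonometric check with the orientation signs correct is the main (though elementary) obstacle.

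Finally, for $m=0$ we have $j_0=1$. The first integral is then $\ell_\gamma$, and the second is $\frac{\pi i}{3}\Psi(\gamma)$ by $\Psi(h)=\int_{\overline{S}_h}E_2^*\,\dd z$. This gives $a_\gamma(0) = \ell_\gamma - \pi i\Psi(\gamma)/3$.
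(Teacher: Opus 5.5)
Your proof is correct and is essentially the paper's. Both split $E_2=E_2^*+3/(\pi\Im z)$ and reduce to the pointwise identity $-\bigl(\frac{2}{z-w_\gamma^+}+\frac{i}{\Im z}\bigr)\dd z=\frac{|\dd z|}{\Im z}$ on the oriented axis. The paper obtains this by recognizing the bracket as $\sqrt{D_\gamma}/Q_\gamma(z,1)$, using $\partial_zQ_\gamma(z,1)/Q_\gamma(z,1)=-i/y$ on $S_\gamma$, whereas you parametrize the semicircle directly.

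Do clean up the trigonometric step, where two sign misstatements happen to cancel. The bracket times $\dd z$ is $(\cot(\theta/2)+i)\,\dd\theta-(\cot\theta+i)\,\dd\theta=\dd\theta/\sin\theta$: the imaginary parts cancel outright, and the half-angle identity only simplifies the real part. Its negative $-\dd\theta/\sin\theta$ therefore equals $+|\dd z|/\Im z$ because $\dd\theta<0$.
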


\begin{proof}
	On $S_\gamma$, with $Q_\gamma = [A, B, C]$ and $z = x+iy$, we have $A|z|^2 + Bx + C = 0$, and therefore
	\[
		\frac{\partial_z Q_\gamma(z,1)}{Q_\gamma(z,1)} = -\frac{i}{y}, \qquad \frac{\sqrt{D_\gamma}}{Q_\gamma(z,1)} + \frac{\partial_z Q_\gamma(z,1)}{Q_\gamma(z,1)} = \frac{2}{z - w_\gamma^+}.
	\]
	On the axis, $-\omega_\gamma(z)$ is therefore the sum of $-\sqrt{D_\gamma}/Q_\gamma(z,1)$ and $-(\pi i/3) E_2^*(z)$. With its chosen orientation, $-\sqrt{D_\gamma} \, \dd z/Q_\gamma(z,1) = |\dd z|/y$, proving the formula. The constant term follows from $j_0 = 1$.
\end{proof}

\subsection{Modular integrals $F_\gamma(z)$}\label{sec2-3}

For $h \sim \gamma$, write $Q_h = [A, B, C]$ and 
\[
	u_h(z) \coloneqq A|z|^2 + B \Re z + C.
\]
For $\sigma = \smat{a & b \\ c & d} \in \Gamma$, the relation $Q_{\sigma^{-1} h\sigma} = Q_h \circ \sigma$ gives $u_{\sigma^{-1} h\sigma}(z) = |cz+d|^2 u_h(\sigma z)$. In particular, off the corresponding axes, 
\begin{align}\label{eq:u-trans}
	\sgn u_{\sigma^{-1} h \sigma}(z) = \sgn u_h(\sigma z).
\end{align}
Let $\chi_h(z)$ be the indicator of the open half-disk $B_h$ bounded by $S_h$ and the real line. Off $S_h$, we have
\begin{align}\label{eq:ep-chi}
	\epsilon_h \chi_h(z) = \frac{1}{2} \bigg(\epsilon_h - \sgn u_h(z) \bigg).
\end{align}

\begin{lemma}\label{lem:local-finite}
	For every compact set $K \subset \bbH$, only finitely many $h \sim \gamma$ satisfy $\overline{B}_h \cap K \neq \varnothing$. Each half-disk has height at most $\sqrt{D_\gamma}/2$.
\end{lemma}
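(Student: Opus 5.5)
The plan is to compute the height directly, then reduce the local finiteness to a finiteness statement about integral quadratic forms of fixed discriminant.

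\textbf{Height bound.} For $h\sim\gamma$ write $Q_h=[A,B,C]$ with $B^2-4AC=D_\gamma$ and $A\neq 0$. We have $A=c\neq 0$ by hyperbolicity, as noted in \cref{sec2-1}. The fixed points $w_h,w'_h$ are the roots of $Q_h(x,1)=0$, so
\[
	w_h - w'_h = \frac{\sqrt{D_\gamma}}{|A|}.
\]
The half-disk $B_h$ has radius $\sqrt{D_\gamma}/(2|A|)$, so its height is at most $\sqrt{D_\gamma}/2$ because $|A|\ge 1$ is an integer.

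\textbf{Local finiteness.} Let $K\subset\bbH$ be compact, and choose $M>0$ and $\delta>0$ with $K\subset\{z : |\Re z|\le M,\ \Im z\ge\delta\}$. Suppose $\overline{B}_h\cap K\neq\varnothing$.

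First, the radius must be at least $\delta$, so $\sqrt{D_\gamma}/(2|A|)\ge\delta$. This gives $|A|\le\sqrt{D_\gamma}/(2\delta)$.

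Next, the center $-B/(2A)$ lies within distance (radius) of some point with $|\Re z|\le M$. Hence
\[
	\left|\frac{B}{2A}\right| \le M + \frac{\sqrt{D_\gamma}}{2|A|}, \qquad\text{so}\qquad |B|\le 2M|A|+\sqrt{D_\gamma},
\]
and $B$ is bounded.

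Finally, $C=(B^2-D_\gamma)/(4A)$ is determined by $A$ and $B$. So only finitely many forms $Q_h$ occur.

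\textbf{From forms to conjugates.} It remains to see that $Q_h$ determines $h$ among the conjugates of $\gamma$. Since $\tr h=\tr\gamma=t$ is fixed, write $h=\smat{a&b\\c&d}$. Then $c=A$, $d-a=B$, $-b=C$ and $a+d=t$, so $h$ is recovered from $Q_h$. Therefore only finitely many $h$ occur.

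There is no real obstacle. The only care needed is that $\overline{B}_h$ meets $K$ through the closed half-disk, which is exactly what the center–radius inequality above accounts for.
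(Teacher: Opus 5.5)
Your proof is correct and follows essentially the same route as the paper. The paper writes membership $z=x+iy\in\overline{B}_h$ as the single inequality $(2Ax+B)^2+4A^2y^2\le D_\gamma$, which bounds $A$ and then $B$ exactly as your radius and center estimates do. It then recovers $C=(B^2-D_\gamma)/(4A)$ and the matrix $h$ as you do, and gets the height bound from $|A|\ge 1$.
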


\begin{proof}
	If $z = x+iy \in K \cap \overline{B}_h$, then
	\[
		(2Ax + B)^2 + 4A^2 y^2 \le D_\gamma.
	\]
	On a compact subset $K$, this bounds the nonzero integer $A$ and then $B$. The coefficient $C = (B^2 - D_\gamma)/(4A)$ and the matrix $h$ are then determined. The height bound follows from $|A| \ge 1$.
\end{proof}

\begin{theorem}\label{thm:modular-int-Fgamma}
	The series $F_\gamma(z) \coloneqq \sum_{m \ge 0} a_\gamma(m) q^m$ is the unique function holomorphic on $\bbH$ and at $i\infty$ satisfying
	\[
		(F_\gamma|_2 \sigma)(z) - F_\gamma(z) = r_\gamma(\sigma, z)
	\]
	for every $\sigma \in \Gamma$.
\end{theorem}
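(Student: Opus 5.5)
Uniqueness is the easy half, so I dispose of it first. If $F_1, F_2$ are two solutions, then $G = F_1 - F_2$ is holomorphic on $\bbH$ and at $i\infty$ and satisfies $G|_2\sigma = G$ for every $\sigma \in \Gamma$. It is therefore a holomorphic modular form of weight $2$ on $\SL_2(\Z)$, and such a form vanishes. So the real content is existence, together with the identification of the Fourier coefficients with $a_\gamma(m)$.

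For existence I would build $F_\gamma$ from a generating-function (kernel) argument using the differential of \cref{lem:omega-h}. Consider
\[
	\Phi(\tau) \coloneqq -\int_{\overline{S}_\gamma} \frac{j'(\tau)}{j(z)-j(\tau)}\, \omega_\gamma(z)\, \dd z,
\]
where I take $\tau$ with $\Im\tau$ large. Asplund's identity $\frac{j'(\tau)}{j(z)-j(\tau)} = -2\pi i \sum_{m\ge0} j_m(z) q_\tau^m$ for $\Im\tau > \Im z$, combined with the height bound of \cref{lem:local-finite}, gives $\Phi(\tau) = 2\pi i\sum_m a_\gamma(m) q_\tau^m$ once $\Im\tau > \sqrt{D_\gamma}/2$ (up to normalizing $j_0$). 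This shows that $F_\gamma$ converges near $i\infty$. To continue $F_\gamma$ analytically, I unfold the integral over $\overline{S}_\gamma$ into a sum over the conjugates $h \sim \gamma$, indexed by $\Gamma_\gamma\backslash\Gamma$, each integrated over a fundamental domain. A cleaner route is to deform the contour. As $\tau$ moves down and crosses a translate $S_h$, the integrand has a pole at $z = \tau$ (mod $\Gamma$), and the residue picks up $2\pi i\,\omega_h(\tau)$ times the crossing sign, up to normalization. Consequently
\[
	F_\gamma(\tau) = \bigl(\text{integral, continuous off } \textstyle\bigcup_h S_h\bigr) + \sum_{h\sim\gamma} \epsilon_h\chi_h(\tau)\,\omega_h(\tau) + \text{const},
\]
and this expression defines a holomorphic function on all of $\bbH$, because the jumps cancel. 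Local finiteness, from \cref{lem:local-finite}, makes the sum locally finite.

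The transformation law then comes from this decomposition. The integral term is weight-$2$ invariant in $\tau$, since $j'(\tau)\,\dd\tau$ transforms correctly and the integral over $\overline{S}_\gamma$ is independent of $\tau$'s representative away from the axes. It is therefore the sum $\sum_h \epsilon_h\chi_h\omega_h$ that produces the cocycle. By \cref{lem:omega-h} and \eqref{eq:u-trans}, the slash by $\sigma$ sends $\epsilon_h\chi_h\omega_h$ to $\epsilon_{h}\chi_{\sigma^{-1}h\sigma}\omega_{\sigma^{-1}h\sigma}$. Using \eqref{eq:ep-chi} to rewrite $\epsilon\chi$ as $\tfrac12(\epsilon - \sgn u)$, the $\sgn u$ parts reindex and cancel. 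What survives is
\[
	\tfrac12 \sum_h \bigl(\epsilon_{\sigma h\sigma^{-1}} - \epsilon_h\bigr)\,\omega_h,
\]
possibly with the opposite sign. By \cref{lem:epsilon-prop} this is a finite sum over $I_\gamma(\sigma)$. Writing $\omega_h = 2/(z-w_h^+) + \frac{\pi i}{3}E_2$, the $E_2$ part carries the coefficient $\sum_{I_\gamma(\sigma)}\epsilon = 0$ by \eqref{eq:sign-cancel}. The rational part is then exactly $r_\gamma(\sigma,z)$, in the form \eqref{eq:r-reexp}, after fixing the global sign convention.

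The main obstacle is the bookkeeping in the contour-shift step: getting the residue computation and the sign and orientation conventions right, so that the jump across each $S_h$ is exactly $\pm\omega_h$ with the factor $\epsilon_h$, and checking that the resulting function is genuinely holomorphic across the axes. If that route gets messy, I would instead define $F_\gamma$ on the region $\Im\tau > \sqrt{D_\gamma}/2$ by the $q$-series. I would then extend it to all of $\bbH$ by imposing the cocycle relation, setting $F_\gamma(\tau) \coloneqq (F_\gamma|_2\sigma^{-1})(\sigma\tau) + \dots$, and verify well-definedness via \cref{prop:modular-cocycle}. Under this approach, consistency on overlaps is exactly what the contour argument proves.
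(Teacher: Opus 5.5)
Your proposal is correct in outline but takes a genuinely different route from the paper. It is essentially the contour-deformation argument of Duke--Imamo\={g}lu--T\'{o}th, which the paper only mentions in the remark after its proof. The paper never constructs $F_\gamma$ directly. It takes the existence of some solution $\widetilde F_\gamma$ from Knopp's theorem. It then adds the locally finite correction $\calB_\gamma=\sum_h\epsilon_h\chi_h\omega_h$, which gives a function $H_\gamma$ that is weight-$2$ invariant off the walls. Finally it identifies the Fourier coefficients by integrating $j_mH_\gamma$ over $\partial\calF_Y$ and applying Cauchy's theorem on each $\calF_Y\cap B_h$.

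You instead write this locally modular function down explicitly: your Asplund-kernel integral $\Phi$ equals $2\pi i H_\gamma$ off the walls, and $F_\gamma$ is obtained by subtracting $\calB_\gamma$. Your route is self-contained and constructive. It needs no Knopp, and convergence of the $q$-series on all of $\bbH$ follows from the periodicity of the continued function. The price is that you must analyze boundary values of a Cauchy-type integral across the walls. This includes the isolated points where walls cross or pass through elliptic points; there the zeros of $z\mapsto j(z)-j(\tau)$ coalesce, but the integral stays bounded, so Riemann's theorem applies. The paper's boundary-integral argument avoids this analysis entirely.

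Several details need correcting, though none breaks the strategy.

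First, the sign is not a matter of convention. By Sokhotski--Plemelj, the boundary value of a Cauchy-type integral from the left of the oriented path minus that from the right equals $2\pi i$ times the residue. The residue of your kernel at $z=g\tau$ is exactly $(\omega_\gamma|_2g)(\tau)=\omega_h(\tau)$, where $h=g^{-1}\gamma g$. The right side of $S_h$ is $\{u_h<0\}$, which is $B_h$ if $\epsilon_h=1$ and its exterior if $\epsilon_h=-1$. Hence the function that extends holomorphically is $\frac{1}{2\pi i}\Phi-\calB_\gamma$, with a minus sign and no constant. With your $+$ sign you would obtain the cocycle $-r_\gamma$. A constant $c$ would add the trivial period function $c(z^{-2}-1)$. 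The kernel identity also has the opposite sign from yours: $j'(\tau)/(j(z)-j(\tau))=+2\pi i\sum_m j_m(z)q_\tau^m$.

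Second, the slash of a single term is $(\epsilon_h\chi_h\omega_h)|_2\sigma=\frac12(\epsilon_h-\sgn u_{h'})\,\omega_{h'}$ with $h'=\sigma^{-1}h\sigma$. It is not $\epsilon_h\chi_{h'}\omega_{h'}$. The reason is that $\sigma^{-1}$ carries $B_h$ to the exterior of $B_{h'}$ exactly when $\epsilon_h\ne\epsilon_{h'}$, and these are precisely the terms that produce the cocycle. Your cancellation of the $\sgn u$ parts is valid only with the corrected formula.

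Third, your fallback fails. Since $\sqrt{D_\gamma}/2\ge\sqrt5/2>1$, the $\Gamma$-translates of $\{\Im\tau>\sqrt{D_\gamma}/2\}$ do not cover $\bbH$; they miss $i$, for instance. So imposing the cocycle relation cannot define $F_\gamma$ everywhere. The obstruction is coverage, not consistency on overlaps, so the contour argument cannot be bypassed.
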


\begin{proof}
	By \cref{prop:modular-cocycle}, $r_\gamma$ is a strongly parabolic cocycle. Moreover, $r_\gamma(\sigma, z) \ll_{\gamma, \sigma} (\Im z)^{-1}$ so each $r_\gamma(\sigma, \cdot)$ belongs to Knopp's coefficient space $\calP$. Applying \cite[Theorem~3]{Knopp1974} and removing the cusp principal part as described in \cite[p.~622]{Knopp1974}, we obtain a function $\widetilde{F}_\gamma(z)$, holomorphic on $\bbH$ and at $i\infty$, such that
	\[
		\widetilde{F}_\gamma(z) = \sum_{m=0}^\infty \widetilde{a}_\gamma(m) q^m, \qquad (\widetilde{F}_\gamma|_2 \sigma)(z) - \widetilde{F}_\gamma(z) = r_\gamma(\sigma, z)
	\]
	for every $\sigma \in \Gamma$. 
	
	By \cref{lem:local-finite}, the sum $\calB_\gamma(z) \coloneqq \sum_{h \sim \gamma} \epsilon_h \chi_h(z) \omega_h(z)$ is locally finite and vanishes above height $\sqrt{D_\gamma}/2$. Using~\eqref{eq:sign-cancel}, \eqref{eq:r-reexp}, \eqref{eq:u-trans}, \eqref{eq:ep-chi}, and \cref{lem:omega-h}, we obtain, off $\calW_\gamma \coloneqq \bigcup_{h \sim \gamma} S_h$,	
	\[
		\calB_\gamma(z) - (\calB_\gamma |_2 \sigma)(z) = \frac{1}{2} \sum_{h \sim \gamma} (\epsilon_h - \epsilon_{\sigma h \sigma^{-1}}) \omega_h(z) = r_\gamma(\sigma, z).
	\]
	Hence $H_\gamma \coloneqq \widetilde{F}_\gamma + \calB_\gamma$ is a locally holomorphic modular form of weight $2$ off $\calW_\gamma$.
	
	Let $\calF_Y$ be the standard fundamental domain truncated at height $Y > \max(1, \sqrt{D_\gamma}/2)$, with positively oriented boundary. The wall set $\calW_\gamma$ meets $\partial \calF_Y$ in only finitely many points, at which $H_\gamma(z)$ has finite one-sided limits. For $m \ge 0$, we integrate $j_m(z) H_\gamma(z) \, \dd z$ piecewise along the boundary. Its invariance under the side-pairing transformations implies that the integrals over paired sides cancel, while $\calB_\gamma(z) = 0$ on the upper edge. Since the constant term of $j_m(z) \widetilde{F}_\gamma(z)$ is $\widetilde{a}_\gamma(m)$ and $j_m(z) \widetilde{F}_\gamma(z)$ is holomorphic, we obtain
	\[
		-\widetilde{a}_\gamma(m) = \int_{\partial \calF_Y} j_m(z) H_\gamma(z) \, \dd z = \int_{\partial \calF_Y} j_m(z) \calB_\gamma(z) \, \dd z.
	\]
	Since $\calF_Y$ is compact, \cref{lem:local-finite} shows that only finitely many half-disks $B_h$ meet $\calF_Y$. We therefore obtain
	\begin{align}\label{eq:boundary-Bh}
		\int_{\partial \calF_Y} j_m(z) \calB_\gamma(z) \,\dd z = \sum_{h \sim \gamma} \epsilon_h \int_{\partial \calF_Y \cap B_h} j_m(z) \omega_h(z) \, \dd z.
	\end{align}
	For each $h$, the function $j_m(z) \omega_h(z)$ is holomorphic on $\bbH$. Applying Cauchy's theorem to $\calF_Y \cap B_h$ gives
	\[
		\int_{\partial \calF_Y \cap B_h} j_m(z) \omega_h(z) \,\dd z = \epsilon_h \int_{S_h \cap \calF_Y} j_m(z) \omega_h(z) \,\dd z.
	\]
	Unfolding the conjugacy class $h = g^{-1} \gamma g$, we obtain
	\[
		\int_{\partial \calF_Y} j_m(z) \calB_\gamma(z) \, \dd z = \sum_{h \sim \gamma} \int_{S_h \cap \calF_Y} j_m(z) \omega_h(z) \,\dd z = \int_{\overline{S}_\gamma} j_m(z) \omega_\gamma (z) \,\dd z = -a_\gamma(m).
	\]
	Thus, $\widetilde{a}_\gamma(m) = a_\gamma(m)$ for every $m \ge 0$. Since the Fourier expansion of $\widetilde{F}_\gamma$ converges on $\bbH$, this coefficient identity shows that the defining series for $F_\gamma$ converges and that $\widetilde{F}_\gamma = F_\gamma$. Uniqueness follows because the difference of two such solutions belongs to $M_2(\Gamma) = \{0\}$.
\end{proof}

\begin{remark}
	Alternatively, we may adapt the contour-deformation argument in~\cite[proof of Theorem~3]{DIT2010}, which gives a direct proof without Knopp's existence theorem.
\end{remark}

\begin{proof}[Proof of \cref{thm:A}]
	Define
	\begin{align}\label{eq:F-lk-correction}
		F_\gamma^{\mathrm{lk}}(z) \coloneqq F_\gamma(z) +\frac{\pi i}{3}\Psi(\gamma)E_2(z).
	\end{align}
	By \cref{thm:modular-int-Fgamma} and~\eqref{eq:Eisen-trans}, this function is holomorphic on $\bbH$ and at $i\infty$ and satisfies~\eqref{eq:linking-trans}.

	Since $E_2(z)=1-24\sum_{m\ge1}\sigma_1(m)q^m$, \cref{prop:a-gamma} gives the constant term
	\[
		a_\gamma(0)+\frac{\pi i}{3}\Psi(\gamma)=\ell_\gamma
	\]
	and, for $m\ge1$, the Fourier coefficient 
	\[
		b_\gamma(m) =a_\gamma(m)-8\pi i\Psi(\gamma)\sigma_1(m).
	\]
	Substituting the expression for $a_\gamma(m)$ from \cref{prop:a-gamma} and using $\Psi(\gamma)=\int_{\overline{S}_\gamma}E_2^*(z)\,\dd z$ yields~\eqref{eq:b-gamma}. Uniqueness follows as in the proof of \cref{thm:modular-int-Fgamma}.
\end{proof}

\section{Linking numbers of modular knots}\label{sec3}

For primitive hyperbolic elements of positive trace, we say that $\gamma$ and $\sigma$ are \emph{coprime} if they are not conjugate in $\Gamma$.

\subsection{Primitives and homogenized cycle integrals}\label{sec3-1}

For $\sigma \in \Gamma$ and $z \in \bbH$, define
\[
	R_\gamma(\sigma,z) \coloneqq \int_z^{\sigma z} F_\gamma(w)\,\dd w.
\]
The integral is independent of the path since $F_\gamma$ is holomorphic on the simply connected domain $\bbH$. Splitting the path at $\sigma_2z$ gives
\[
	R_\gamma(\sigma_1\sigma_2,z) =R_\gamma(\sigma_1,\sigma_2z)+R_\gamma(\sigma_2,z),
\]
and differentiation with respect to $z$ gives
\[
	\partial_zR_\gamma(\sigma,z) =(F_\gamma|_2\sigma)(z)-F_\gamma(z) =r_\gamma(\sigma,z).
\]

\begin{proposition}\label{prop:homog-cyc}
	Let $\gamma$ and $\sigma$ be coprime. Then $R_\gamma(\sigma,z)$ extends holomorphically across $w_\sigma^+$, and for every $z_0\in\bbH$,
	\[
		\lim_{N\to\infty}\int_{\sigma^Nz_0}^{\sigma^{N+1}z_0}F_\gamma(z)\,\dd z =R_\gamma(\sigma,w_\sigma^+).
	\]
	This value is independent of $z_0$ and depends only on the conjugacy classes of $\gamma$ and $\sigma$.
\end{proposition}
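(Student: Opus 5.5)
Since $\partial_z R_\gamma(\sigma,z) = r_\gamma(\sigma,z)$, I will show that $r_\gamma(\sigma,\cdot)$ is holomorphic near $w_\sigma^+$; a holomorphic primitive then extends across that point. By \cref{def:r-gamma}, $r_\gamma(\sigma,z)$ is a finite sum of simple poles $2\epsilon_h/(z-w_h^+)$, with $h=g^{-1}\gamma g$ and $g\in I_\gamma(\sigma)$. So it suffices to show that $w_\sigma^+ \neq w_h^+$ for every conjugate $h$ of $\gamma$.

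Suppose $w_h^+ = w_\sigma^+$. Hyperbolic elements of $\Gamma$ sharing one fixed point share both: the fixed points are conjugate real quadratic irrationals, and the stabilizer of a real quadratic irrational in $\Gamma$ is $\{\pm \eta^n\}$ for a primitive $\eta$. Since $h$ and $\sigma$ are both primitive of positive trace and have the same attracting point, we get $h=\sigma$. This contradicts coprimality. Thus $r_\gamma(\sigma,\cdot)$ is a rational function with no pole at $w_\sigma^+$, and hence holomorphic on a disk $D$ around $w_\sigma^+$.

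On $D\cap\bbH$ we have $R_\gamma(\sigma,z) = R_\gamma(\sigma,z_1)+\int_{z_1}^z r_\gamma(\sigma,w)\,\dd w$, and the right side extends holomorphically to all of $D$. This proves the first claim.

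For the limit, the cocycle relation gives
\[
\int_{\sigma^N z_0}^{\sigma^{N+1}z_0}F_\gamma(z)\,\dd z = R_\gamma(\sigma,\sigma^N z_0).
\]
As $N\to\infty$, $\sigma^N z_0\to w_\sigma^+$ in $\bbH\cup\R$, since $w_\sigma^+$ is attracting for the Möbius action. The point tends to the boundary but stays in $\bbH\cap D$ for large $N$. By continuity of the extension, the limit equals $R_\gamma(\sigma,w_\sigma^+)$, independently of $z_0$.

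For conjugacy invariance I treat the two arguments separately.
\begin{itemize}
\item \emph{Changing $\gamma$.} The series $F_\gamma$ has coefficients $a_\gamma(m)$, which depend only on the class of $\gamma$ (remark after \cref{lem:omega-h}). So $R_\gamma$ itself is unchanged.
\item \emph{Changing $\sigma$.} Replace $\sigma$ by $\tau^{-1}\sigma\tau$ and put $z_0'=\tau^{-1}z_0$. Then
\[
\int_{\tau^{-1}\sigma^N z_0}^{\tau^{-1}\sigma^{N+1}z_0}F_\gamma\,\dd z = \int_{\sigma^Nz_0}^{\sigma^{N+1}z_0}(F_\gamma|_2\tau^{-1})(w)\,\dd w,
\]
where $F_\gamma|_2\tau^{-1}=F_\gamma + r_\gamma(\tau^{-1},\cdot)$. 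The extra term contributes $\int_{\sigma^N z_0}^{\sigma^{N+1}z_0} r_\gamma(\tau^{-1},w)\,\dd w$.
\end{itemize}

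The main obstacle is showing that this extra term tends to $0$. Its endpoints both converge to $w_\sigma^+$. It is enough that $r_\gamma(\tau^{-1},\cdot)$ has no pole at $w_\sigma^+$, which holds by the same argument as before. Its poles are points $w_h^+$ with $h\sim\gamma$, and coprimality excludes $w_\sigma^+$. Then I integrate along a path inside the small disk, where the integrand is bounded and the path length tends to $0$. The transported limit therefore agrees, so the value depends only on the two conjugacy classes.
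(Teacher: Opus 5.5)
Your proof is correct and follows the paper's argument: you use the same stabilizer argument to rule out $w_\sigma^+ = w_h^+$, and you extend the primitive across $w_\sigma^+$ by the same route. You also identify the limit with $R_\gamma(\sigma,\sigma^N z_0)$, and your change-of-variables handling of conjugating $\sigma$ is just the integral form of the paper's identity $R_\gamma(\sigma',uz)=R_\gamma(\sigma,z)+R_\gamma(u,\sigma z)-R_\gamma(u,z)$, since your correction term equals $R_\gamma(\tau^{-1},\sigma^{N+1}z_0)-R_\gamma(\tau^{-1},\sigma^N z_0)$.
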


\begin{proof}
	We first show that $w_\sigma^+\neq w_h^+$ for every $h\sim\gamma$. Suppose that $w_\sigma^+=w_h^+$ for some $h\sim\gamma$. Since $\sigma$ fixes $w_\sigma^+$, it also fixes its quadratic conjugate $w_h^-$, and hence belongs to $\Gamma_h=\{\pm h^n:n\in\Z\}$. The positive trace and the common attracting fixed point imply that $\sigma=h^n$ for some $n\ge1$. Since $\sigma$ is primitive, we have $n=1$, contrary to the assumption that $\sigma$ is not conjugate to $\gamma$.

	For any $\delta\in\Gamma$, \cref{def:r-gamma} shows that the poles of $r_\gamma(\delta,z)$ belong to the set of points $w_h^+$ with $h\sim\gamma$. Thus $r_\gamma(\delta,z)$ is holomorphic in a neighborhood of $w_\sigma^+$. Since $\partial_zR_\gamma(\delta,z)=r_\gamma(\delta,z)$, a local primitive of $r_\gamma(\delta,z)$ gives a holomorphic extension of $R_\gamma(\delta,z)$ across $w_\sigma^+$.

	Since $\sigma^Nz_0\to w_\sigma^+$, the definition of $R_\gamma$ now gives
	\[
		\int_{\sigma^Nz_0}^{\sigma^{N+1}z_0}F_\gamma(z)\,\dd z =R_\gamma(\sigma,\sigma^Nz_0) \longrightarrow R_\gamma(\sigma,w_\sigma^+).
	\]
	This proves the existence of the limit and its independence of $z_0$.

	Conjugating $\gamma$ leaves each coefficient $a_\gamma(m)$, and hence $F_\gamma$ and $R_\gamma$, unchanged. Let $u\in\Gamma$ and put $\sigma'=u\sigma u^{-1}$. Since $u\sigma=\sigma'u$, the cocycle relation gives
	\[
		R_\gamma(\sigma',uz)
		=R_\gamma(\sigma,z)+R_\gamma(u,\sigma z)-R_\gamma(u,z).
	\]
	As $z\to w_\sigma^+$ from $\bbH$, the last two terms tend to the same value. Since $\sigma'$ is also coprime to $\gamma$, the extension argument above applies at $w_{\sigma'}^+$. Noting that $uz\to uw_\sigma^+=w_{\sigma'}^+$, we obtain
	\[
		R_\gamma(\sigma',w_{\sigma'}^+)=R_\gamma(\sigma,w_\sigma^+),
	\]
	which proves invariance under conjugation of $\sigma$.
\end{proof}

\subsection{$\matL \matR$-words}\label{sec3-2}

Set
\[
	\matL = \pmat{1 & 0 \\ 1 & 1}, \qquad \matR = \pmat{1 & 1 \\ 0 & 1} = T.
\]
Every primitive hyperbolic conjugacy class of positive trace has a representative given by a word in $\matL$ and $\matR$ containing both letters, unique up to cyclic shift. Since the class is primitive, this word is not a proper power.

Denote the set of these cyclic representatives by $\Cyc(\gamma)$, and partition it as $\Cyc(\gamma) = \Cyc_{\matL}(\gamma) \sqcup \Cyc_{\matR}(\gamma)$ according to the last letter. For $A \in \Cyc(\gamma)$, let $x_A = w_A^+$ be its attracting fixed point, and set
\[
	X_\gamma \coloneqq \{x_A : A \in \Cyc(\gamma)\}.
\]
Define $X_{\gamma, \matL}$ and $X_{\gamma, \matR}$ similarly from $\Cyc_{\matL}(\gamma)$ and $\Cyc_{\matR}(\gamma)$, respectively. For $h \sim \gamma$, the condition $w_h^- < 0 < w_h^+$ is equivalent to all entries of $h$ being positive. Such matrices are precisely the $\matL \matR$-words containing both letters, so the word classification above gives
\begin{align}\label{eq:Xgam-simple}
	X_\gamma = \{w_h^+ : h \sim \gamma,\ w_h^- < 0 < w_h^+\}.
\end{align}
By~\cite[Lemma~7]{Rademacher1972}, we have
\begin{align}\label{eq:Rade-word}
	\Psi(\gamma) = \# X_{\gamma, \matR} - \# X_{\gamma, \matL}.
\end{align}
Moving the last letter to the front induces the permutation
\begin{align}\label{eq:perm-phi}
	\varphi_\gamma(x) \coloneqq \begin{cases}
		x/(1+x) &\text{if } x \in X_{\gamma, \matL},\\
		x+1 &\text{if } x \in X_{\gamma, \matR}.
	\end{cases}
\end{align}
Whenever it exists, we write $f(t+i0)$ for the boundary value of a function $f$ on $\bbH$ at $t\in\R$.

\begin{lemma}\label{lem:Im-R-limits}
	For $t>0$ with $t\notin X_\gamma$, we have
	\begin{align}
		\frac{1}{2\pi}\Im R_\gamma(\matL,t+i0) &=\frac{\Psi(\gamma)}6-\#\{x\in X_{\gamma, \matR}:x<t\},\\
		\frac{1}{2\pi}\Im R_\gamma(\matR,t+i0) &=-\frac{\Psi(\gamma)}6.
	\end{align}
\end{lemma}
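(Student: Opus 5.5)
The second identity is the easy one. Since $\matR=T$ and $F_\gamma(w+1)=F_\gamma(w)$, the integral $R_\gamma(T,z)=\int_z^{z+1}F_\gamma(w)\,\dd w$ equals the constant Fourier coefficient $a_\gamma(0)$ for every $z\in\bbH$. By \cref{prop:a-gamma}, $a_\gamma(0)=\ell_\gamma-\pi i\Psi(\gamma)/3$. Hence $\frac{1}{2\pi}\Im R_\gamma(\matR,t+i0)=-\Psi(\gamma)/6$ for every real $t$.

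For $\matL$, I first show that $t\mapsto\Im R_\gamma(\matL,t+i0)$ is locally constant on $(0,\infty)$ away from the real poles of $r_\gamma(\matL,\cdot)$. Indeed, $\partial_zR_\gamma(\matL,z)=r_\gamma(\matL,z)$ is a rational function with real poles and real coefficients, so it is real on $\R$ off its poles. Near a real pole $x$ with residue $2\epsilon$, we have $R_\gamma(\matL,z)=2\epsilon\log(z-x)+(\text{holomorphic})$. Crossing $x$ from left to right in the upper half-plane changes $\arg(z-x)$ from $\pi$ to $0$, so $\frac{1}{2\pi}\Im R_\gamma$ jumps by $-\epsilon$.

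Next I identify the poles of $r_\gamma(\matL,\cdot)$ in $(0,\infty)$. For $\matL$, $-d/c=-1$, so $I_\gamma(\matL)$ consists of the conjugates $h$ with $w'_h<-1<w_h$. If $\epsilon_h=-1$, then $w_h^+=w'_h<-1$, which does not lie in $(0,\infty)$. If $\epsilon_h=+1$, then $w_h^+=w_h$, and a pole in $(0,\infty)$ forces $w_h^-<-1<0<w_h^+$. By \eqref{eq:Xgam-simple}, $h$ is then an $\matL\matR$-word with attracting point in $X_\gamma$. I expect the following dichotomy from the continued fraction of the repelling point: writing $w_h^-=-[0;\text{reversed word}]$, the last letter is $\matR$ exactly when $w_h^-<-1$, and $\matL$ exactly when $-1<w_h^-<0$. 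Granting this, the poles of $r_\gamma(\matL,\cdot)$ in $(0,\infty)$ are exactly the points of $X_{\gamma,\matR}$, each with $\epsilon=+1$. Therefore
\[
	\frac{1}{2\pi}\Im R_\gamma(\matL,t+i0)=C-\#\{x\in X_{\gamma,\matR}:x<t\}
\]
for some constant $C$.

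It remains to show $C=\Psi(\gamma)/6$, by evaluating the limit $t\to0^+$. One checks that $S\matL S^{-1}=T^{-1}$. The conjugation identity from the proof of \cref{prop:homog-cyc}, applied with $u=S$, then reads
\[
	-a_\gamma(0)=R_\gamma(T^{-1},Sz)=R_\gamma(\matL,z)+R_\gamma(S,\matL z)-R_\gamma(S,z).
\]
The poles of $\rho_\gamma=r_\gamma(S,\cdot)$ are the points $x$ and $-1/x$ with $x\in X_\gamma$, a finite set bounded away from $0$. Hence $R_\gamma(S,\cdot)$ extends holomorphically (and thus continuously) to a neighborhood of $0$. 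As $z\to0$ from $\bbH$ near the real axis we also have $\matL z\to0$, so the last two terms cancel in the limit. This gives $\Im R_\gamma(\matL,0^++i0)=-\Im a_\gamma(0)=\pi\Psi(\gamma)/3$, i.e.\ $C=\Psi(\gamma)/6$.

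The main obstacle is the pole bookkeeping: checking that every $h\in I_\gamma(\matL)$ with a pole in $(0,\infty)$ is positive with last letter $\matR$, and conversely that every $x\in X_{\gamma,\matR}$ arises from some $h\in I_\gamma(\matL)$. Both directions rest on the continued-fraction description of $w_h^-$ for $\matL\matR$-words.
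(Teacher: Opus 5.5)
Your argument is correct and takes a different route from the paper's. The one input you leave as ``granted'' is true and closes in two lines, using the same cyclic shift the paper encodes in $\varphi_\gamma$. Let $h=t_1\cdots t_n\in\Cyc(\gamma)$ and $h'=t_nt_1\cdots t_{n-1}=t_nht_n^{-1}$. Then $h'$ is also in $\Cyc(\gamma)$, so $w_{h'}^-<0$, and $w_h^-=t_n^{-1}w_{h'}^-$.
\begin{itemize}
\item If $t_n=\matR$, this gives $w_h^-=w_{h'}^--1<-1$.
\item If $t_n=\matL$, it gives $w_h^-=w_{h'}^-/(1-w_{h'}^-)\in(-1,0)$.
\end{itemize}
Combined with your reading of $I_\gamma(\matL)$, this settles both directions of the pole bookkeeping. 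Only terms with $\epsilon_h=+1$ can have poles in $(0,\infty)$, and these are exactly the $h$ with $w_h^-<-1<0<w_h^+$. So there is residue $2$ at each point of $X_{\gamma,\matR}$ and no pole at $X_{\gamma,\matL}$.

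Your formula $w_h^-=-[0;\text{reversed word}]$ is not right as written, since it would always put $w_h^-$ in $(-1,0)$. The leading partial quotient of the reversed word is $0$ only when $h$ ends in $\matL$.

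On the comparison of routes, the paper works globally:
\begin{itemize}
\item It integrates $\rho_\gamma$ explicitly into logarithms, normalized by $R_\gamma(S,i)=0$.
\item It writes $R_\gamma(\matL,z)=R_\gamma(S,-1-1/z)+R_\gamma(S,z)-a_\gamma(0)$ via $ST^{-1}S=-\matL$.
\item It reads off the boundary imaginary parts as $n_\gamma(t/(1+t))-n_\gamma(t)+\Psi(\gamma)/6$.
\item It uses the permutation $\varphi_\gamma$ to turn $n_\gamma(t/(1+t))$ into $\#\{x\in X_{\gamma,\matL}:x<t\}$.
\end{itemize}
You instead locate the jumps directly as the poles of the cocycle $r_\gamma(\matL,\cdot)$ through the crossing set $I_\gamma(\matL)$. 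You then fix the constant with a single limit $t\to0^+$, using $S\matL S^{-1}=T^{-1}$ and the regularity of $R_\gamma(S,\cdot)$ at $0$.

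Your route never needs the explicit primitive of $\rho_\gamma$ or its branch constant $-\pi i\,\#X_\gamma$. It also makes visible that $\#\{x\in X_{\gamma,\matR}:x<t\}$ counts axes crossing the geodesic from $-1$ to $i\infty$, and the jump analysis works verbatim for other $\sigma$. The price is the separate analysis of repelling fixed points above. The paper needs only facts about attracting points (through $\varphi_\gamma$), which it reuses in \cref{lem:CRL=CLR}.
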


\begin{proof}
	By \eqref{eq:rho-gamma} and \eqref{eq:Xgam-simple},
	\[
		r_\gamma(S,z) =2\sum_{x\in X_\gamma} \left(\frac1{z-x}-\frac1{z+1/x}\right).
	\]
	Integrating with the logarithms chosen by $0<\arg<\pi$ and using $R_\gamma(S,i)=0$, since $Si=i$, we obtain
	\[
		R_\gamma(S,z) =2\sum_{x\in X_\gamma} \bigg(\log(z-x)-\log(1+xz)\bigg) -\pi i\,\#X_\gamma,
	\]
	where we used $\log(i-x)-\log(1+ix)=\pi i/2$. Moreover, the Fourier expansion of $F_\gamma$ gives
	\[
		R_\gamma(\matR,z)=R_\gamma(T,z) =\int_z^{z+1}F_\gamma(w)\,\dd w =a_\gamma(0).
	\]
	The second formula of the lemma now follows from \cref{prop:a-gamma}.

	Put $n_\gamma(u)\coloneqq\#\{x\in X_\gamma:x<u\}$. Since $\varphi_\gamma$ is a permutation of $X_\gamma$, \eqref{eq:perm-phi} gives $\varphi_\gamma(X_{\gamma, \matL})=X_\gamma\cap(0,1)$, and hence $n_\gamma (t/(1+t)) =\#\{x\in X_{\gamma, \matL}:x<t\}$. Also $t/(1+t)\notin X_\gamma$, since otherwise the injectivity of $x\mapsto x/(1+x)$ would imply $t\in X_\gamma$.

	Since $ST^{-1}S= -\matL$, the cocycle relation gives
	\[
		R_\gamma(\matL, z) =R_\gamma(S,-1-1/z)+R_\gamma(S,z)-a_\gamma(0).
	\]
	Taking imaginary parts of the boundary values in the logarithmic formula, we obtain
	\begin{align*}
		\frac{1}{2\pi}\Im R_\gamma(\matL,t+i0) =n_\gamma\left(\frac{t}{1+t}\right)-n_\gamma(t) -\frac{\Im a_\gamma(0)}{2\pi} =\frac{\Psi(\gamma)}6 -\#\{x\in X_{\gamma, \matR}:x<t\},
	\end{align*}
	where we again used \cref{prop:a-gamma}.
\end{proof}

For coprime $\gamma$ and $\sigma$, define
\begin{align*}
	C_{\matR \matL}(\gamma, \sigma) &\coloneqq \#\{(x,y) \in X_{\gamma, \matR} \times X_{\sigma, \matL}: x < y\},\\
	C_{\matL \matR}(\gamma, \sigma) &\coloneqq \#\{(x,y) \in X_{\gamma, \matL} \times X_{\sigma, \matR} : x > y\}.
\end{align*}
The combinatorics below are illustrated concretely in \cref{ex:linking-example}. 

\begin{lemma}\label{lem:CRL=CLR}
	We have $C_{\matR \matL}(\gamma, \sigma) = C_{\matL \matR}(\gamma, \sigma)$.
\end{lemma}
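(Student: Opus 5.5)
The plan is a double-counting argument with the permutations $\varphi_\gamma$ and $\varphi_\sigma$ from \eqref{eq:perm-phi}. Consider the finite set $X_\gamma \times X_\sigma$ and the function
\[
	f(x,y) \coloneqq \#\{\text{$1$ if } x<y,\ \text{$0$ otherwise}\}, \qquad \text{i.e. } f(x,y) = [x<y].
\]
First I will check that $X_\gamma \cap X_\sigma = \varnothing$, so that $f(x,y)+[x>y]=1$ for every pair. This follows from the first step of the proof of \cref{prop:homog-cyc}. If $x_A = x_B$ with $A \sim \gamma$ and $B \sim \sigma$, then $B$ fixes $x_A$ and also its quadratic conjugate, so $B$ lies in $\Gamma_A$. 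Positivity of the trace and primitivity then force $B = A$, which contradicts coprimality.

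The map $\Phi(x,y) \coloneqq (\varphi_\gamma(x), \varphi_\sigma(y))$ is a bijection of $X_\gamma \times X_\sigma$, because each $\varphi$ is a permutation. Hence
\[
	\sum_{(x,y)} \bigl(f(\Phi(x,y)) - f(x,y)\bigr) = 0.
\]
I will compute each summand by splitting into four cases according to the last letters of the words for $x$ and $y$:
\begin{itemize}
	\item If $x \in X_{\gamma,\matR}$ and $y \in X_{\sigma,\matR}$, both coordinates are shifted by $t \mapsto t+1$. This map is increasing, so $f$ is unchanged.
	\item If $x \in X_{\gamma,\matL}$ and $y \in X_{\sigma,\matL}$, both coordinates are moved by $t \mapsto t/(1+t)$. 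This map is increasing on $(0,\infty)$, so again $f$ is unchanged.
	\item If $x \in X_{\gamma,\matR}$ and $y \in X_{\sigma,\matL}$, then $\varphi_\gamma(x) = x+1 > 1 > y/(1+y) = \varphi_\sigma(y)$, using that all points are positive. So $f(\Phi(x,y)) = 0$, and the change is $-[x<y]$.
	\item If $x \in X_{\gamma,\matL}$ and $y \in X_{\sigma,\matR}$, then $\varphi_\gamma(x) < 1 < \varphi_\sigma(y)$. So $f(\Phi(x,y)) = 1$, and the change is $1-[x<y] = [x>y]$ by disjointness.
\end{itemize}

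Summing over all pairs, the total change is $-C_{\matR\matL}(\gamma,\sigma) + C_{\matL\matR}(\gamma,\sigma)$. Since this total vanishes, the lemma follows.

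The only delicate points are the disjointness $X_\gamma\cap X_\sigma=\varnothing$ and the positivity of all elements of $X_\gamma$ and $X_\sigma$. Positivity is guaranteed by \eqref{eq:Xgam-simple}, and it is what makes the comparisons with $1$ valid. Disjointness is where coprimality enters, and I expect it to be the main (though mild) obstacle.
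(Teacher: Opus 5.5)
Your proof is correct and follows the paper's argument. The number of pairs with $x<y$ is invariant under the bijection $\varphi_\gamma\times\varphi_\sigma$, same-letter pairs keep their order, and the mixed pairs lose exactly $C_{\matR\matL}(\gamma,\sigma)$ and gain exactly $C_{\matL\matR}(\gamma,\sigma)$. Your stabilizer argument for $X_\gamma\cap X_\sigma=\varnothing$ (the shared attracting fixed point is what makes the exponent positive) spells out a step the paper only asserts.
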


\begin{proof}
	Since $\varphi_\gamma$ and $\varphi_\sigma$ are permutations, the number of pairs $(x,y)\in X_\gamma\times X_\sigma$ satisfying $x<y$ is unchanged under $\varphi_\gamma\times\varphi_\sigma$. If the cyclic representatives corresponding to $x$ and $y$ have the same last letter, the inequality is preserved. If $x\in X_{\gamma, \matR}$ and $y\in X_{\sigma, \matL}$, then
	\[
		\varphi_\gamma(x)=x+1>1>\frac{y}{1+y}=\varphi_\sigma(y),
	\]
	so precisely the pairs with $x<y$, counted by $C_{\matR \matL}(\gamma,\sigma)$, are lost. Conversely, if $x\in X_{\gamma, \matL}$ and $y\in X_{\sigma, \matR}$, then
	\[
		\varphi_\gamma(x)=\frac{x}{1+x}<1<y+1=\varphi_\sigma(y),
	\]
	so precisely the pairs with $x>y$, counted by $C_{\matL \matR}(\gamma,\sigma)$, are gained. Here equality cannot occur since $X_\gamma\cap X_\sigma=\varnothing$. Hence the numbers lost and gained are equal, and therefore $C_{\matR \matL}(\gamma,\sigma)=C_{\matL \matR}(\gamma,\sigma)$.
\end{proof}

\begin{proposition}\label{prop:hcyc-CRL}
	For coprime $\gamma$ and $\sigma$ and any $z_0 \in \bbH$, we have
	\[
		\frac{1}{2\pi} \Im \lim_{N \to \infty} \int_{\sigma^N z_0}^{\sigma^{N+1} z_0} F_\gamma(z) \,\dd z = -C_{\matR \matL}(\gamma, \sigma) - \frac{1}{6} \Psi(\gamma) \Psi(\sigma).
	\]
\end{proposition}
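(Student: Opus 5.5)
By \cref{prop:homog-cyc}, the left-hand side equals $\frac{1}{2\pi}\Im R_\gamma(\sigma,w_\sigma^+)$, and this value depends only on the conjugacy class of $\sigma$. So I would first replace $\sigma$ by a cyclic representative $A=M_1M_2\cdots M_k\in\Cyc(\sigma)$ with each $M_j\in\{\matL,\matR\}$, and set $x_A=w_A^+>0$. Iterating the cocycle relation $R_\gamma(\sigma_1\sigma_2,z)=R_\gamma(\sigma_1,\sigma_2z)+R_\gamma(\sigma_2,z)$ gives, for $z\in\bbH$,
\[
	R_\gamma(A,z)=\sum_{j=1}^k R_\gamma\bigl(M_j,\,M_{j+1}\cdots M_k z\bigr).
\]
I would then let $z\to x_A$ from $\bbH$.

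\textbf{Step 1: identify the limit points.} The point $y_j\coloneqq M_{j+1}\cdots M_k x_A$ is the attracting fixed point of the cyclic rotation $A_j\coloneqq M_{j+1}\cdots M_kM_1\cdots M_j$, which is conjugate to $A$ via $A_j=(M_{j+1}\cdots M_k)A(M_{j+1}\cdots M_k)^{-1}$. Hence $y_j\in X_\sigma$, and the last letter of $A_j$ is $M_j$. Since $\sigma$ is primitive, the $k$ rotations are distinct, so $j\mapsto y_j$ is a bijection onto $X_\sigma$ with $y_j\in X_{\sigma,M_j}$. In particular, $\#\{j: M_j=\matR\}=\#X_{\sigma,\matR}$, and similarly for $\matL$.

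\textbf{Step 2: justify passing to boundary values term by term.} Since $\gamma$ and $\sigma$ are coprime, $X_\gamma\cap X_\sigma=\varnothing$; indeed the first paragraph of the proof of \cref{prop:homog-cyc} shows $y_j\neq w_h^+$ for all $h\sim\gamma$. Hence each $r_\gamma(M_j,\cdot)$ is holomorphic near $y_j$, so $R_\gamma(M_j,\cdot)$ extends holomorphically there. Moreover, the path $M_{j+1}\cdots M_kz$ stays in $\bbH$ and tends to $y_j$. Therefore the limit of each term is the boundary value $R_\gamma(M_j,y_j+i0)$. The same reasoning gives $R_\gamma(A,x_A)=\lim_{z\to x_A}R_\gamma(A,z)$. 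I expect this to be the main technical point, though it is routine given the holomorphic extensions, and \cref{lem:Im-R-limits} is stated exactly for such $t=y_j>0$ with $t\notin X_\gamma$.

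\textbf{Step 3: apply \cref{lem:Im-R-limits} and sum.} Each $\matR$-letter contributes $-\Psi(\gamma)/6$ to $\frac{1}{2\pi}\Im$. Each $\matL$-letter with $y_j\in X_{\sigma,\matL}$ contributes $\Psi(\gamma)/6-\#\{x\in X_{\gamma,\matR}:x<y_j\}$. Summing over $j$ gives
\[
	\frac{\Psi(\gamma)}{6}\bigl(\#X_{\sigma,\matL}-\#X_{\sigma,\matR}\bigr)-\sum_{y\in X_{\sigma,\matL}}\#\{x\in X_{\gamma,\matR}:x<y\}.
\]
The second sum is exactly $C_{\matR\matL}(\gamma,\sigma)$. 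By Rademacher's formula \eqref{eq:Rade-word} applied to $\sigma$, the first term equals $-\Psi(\gamma)\Psi(\sigma)/6$. This gives the claimed identity.
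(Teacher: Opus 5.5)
Your proposal is correct and follows the paper's proof essentially step for step. You reduce to an $\matL\matR$-word representative via the conjugacy invariance in \cref{prop:homog-cyc}, expand $R_\gamma(\sigma,z)$ with the cocycle relation, identify the limit points $M_{j+1}\cdots M_k w_\sigma^+$ with $X_\sigma$ sorted by last letter, apply \cref{lem:Im-R-limits}, and finish with \eqref{eq:Rade-word}. Your Step~2, which justifies passing to boundary values term by term, spells out what the paper compresses into one line.
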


\begin{proof}
	By the conjugacy invariance in \cref{prop:homog-cyc}, we may take $\sigma=t_1\cdots t_n$ with $t_i\in\{\matL, \matR\}$. Put $v_i=t_{i+1}\cdots t_n$. Iterating the cocycle relation gives
	\[
		R_\gamma(\sigma,z)=\sum_{i=1}^n R_\gamma(t_i,v_iz).
	\]
	Letting $z\to w_\sigma^+$ from $\bbH$ and using \cref{prop:homog-cyc}, we obtain
	\[
		\lim_{N\to\infty}
		\int_{\sigma^Nz_0}^{\sigma^{N+1}z_0}F_\gamma(z)\,\dd z
		=\sum_{i=1}^nR_\gamma(t_i,v_iw_\sigma^++i0).
	\]
	The point $v_iw_\sigma^+$ is the attracting fixed point of the cyclic representative $v_it_1\cdots t_i$, whose last letter is $t_i$. As $i$ ranges over the indices with $t_i =\matL$, these points run through $X_{\sigma, \matL}$, and similarly for $\matR$.

	Since $X_\gamma\cap X_\sigma=\varnothing$, \cref{lem:Im-R-limits} applies. The nonconstant contributions from the $\matL$-terms are
	\[
		-\sum_{y\in X_{\sigma, \matL}} \#\{x\in X_{\gamma, \matR}:x<y\} =-C_{\matR \matL}(\gamma,\sigma).
	\]
	The constant contributions are
	\[
		\frac{\Psi(\gamma)}{6} \# X_{\sigma, \matL} -\frac{\Psi(\gamma)}{6} \#X_{\sigma, \matR} = -\frac{1}{6} \Psi(\gamma)\Psi(\sigma),
	\]
	where we used \eqref{eq:Rade-word}. Combining the two contributions proves the proposition.
\end{proof}

\subsection{Simon's linking number formula}\label{sec3-3}

Simon~\cite[Proposition~5.2]{Simon2025} computes the linking number of coprime modular knots by counting the positive crossings in the standard Lorenz diagram. As observed in the proof of his proposition, the contributing pairs of cyclic representatives are precisely those for which the order of the last letters is opposite to the order of the words themselves. Here $\matL \matR$-words are compared lexicographically, with $\matL < \matR$, after infinite repetition. This order agrees with that of their attracting fixed points. Hence the contributing pairs are precisely
\[
	A\in\Cyc_{\matR}(\gamma),\qquad B\in\Cyc_{\matL}(\sigma),\qquad x_A<x_B,
\]
and
\[
	A\in\Cyc_{\matL}(\gamma),\qquad B\in\Cyc_{\matR}(\sigma),\qquad x_A>x_B.
\]
Since all crossings in Simon's Lorenz diagram are positive, his formula therefore gives
\[
	\mathrm{Lk}_{\mathrm{Si}}(C_\gamma,C_\sigma) = \frac{1}{2} \bigg(C_{\matR \matL}(\gamma,\sigma)+C_{\matL \matR}(\gamma,\sigma)\bigg) = C_{\matR \matL}(\gamma,\sigma),
\]
by \cref{lem:CRL=CLR} (see also Rickards~\cite{Rickards2023}). Simon represents modular knots as positive Lorenz braids~\cite[Section~5.2]{Simon2025}, while we use the sign convention of Duke--Imamo\={g}lu--T\'{o}th~\cite[Section~6]{DIT2017}. The resulting linking numbers differ by a sign. Therefore, in our convention, 
\begin{align}\label{eq:Simon-linking}
	\mathrm{Lk}(C_\gamma,C_\sigma)=-C_{\matR \matL}(\gamma,\sigma). 
\end{align}

\begin{proof}[Proof of \cref{thm:B}] 
	Combining Simon's formula with \cref{prop:hcyc-CRL} gives 
	\[ 
		\mathrm{Lk}(C_\gamma,C_\sigma) = \frac{1}{2\pi}\Im \lim_{N\to\infty} \int_{\sigma^N z_0}^{\sigma^{N+1}z_0} F_\gamma(z)\,\dd z + \frac16\Psi(\gamma)\Psi(\sigma). 
	\] 
	By \eqref{eq:F-lk-correction} and \eqref{eq:classical-hcyc}, the right-hand side is exactly
	\[
	\frac{1}{2\pi}\Im \lim_{N\to\infty} \int_{\sigma^N z_0}^{\sigma^{N+1}z_0} F_\gamma^{\mathrm{lk}}(z)\,\dd z.
	\]
	This proves the theorem.
\end{proof}

\begin{example}\label{ex:linking-example}
	Let
	\[
		\gamma = \matL^2 \matR = \pmat{1&1\\2&3}, \qquad \sigma = \matL^3 \matR = \pmat{1&1\\3&4}.
	\]
	The class of $\gamma^{-1}$ is represented by $S\gamma^{-1}S^{-1} = \matL \matR^2$. By \eqref{eq:Rade-word},
	\[
		\Psi(\gamma) = -1, \qquad \Psi(\gamma^{-1}) = 1, \qquad \Psi(\sigma) = -2,
	\]
	which are precisely the linking numbers of the corresponding modular knots with the trefoil $\calT$ by Ghys' formula~\eqref{eq:Ghys-linking}. The cyclic representatives and their attracting fixed points are as follows:
	\[
		\renewcommand{\arraystretch}{1.5}
		\begin{array}{cc||cc||cc}
			A \in \Cyc(\gamma) & x_A
			& A' \in \Cyc(\gamma^{-1}) & x_{A'}
			& B \in \Cyc(\sigma) & x_B\\
			\hline
			\matL \matL \matR & (\sqrt3-1)/2
			& \matL \matR \matR & \sqrt3-1
			& \matL \matL \matL \matR & (\sqrt{21}-3)/6\\
			\matL \matR \matL & 1/\sqrt3
			& \matR \matL \matR & \sqrt3
			& \matL \matL \matR \matL & (\sqrt{21}-1)/10\\
			\matR \matL \matL & (1+\sqrt3)/2
			& \matR \matR \matL & 1+\sqrt3
			& \matL \matR \matL \matL & (\sqrt{21}+1)/10\\
			& & & &
			\matR \matL \matL \matL & (\sqrt{21}+3)/6
		\end{array}
	\]
		The pairs contributing to $C_{\matR \matL}(\gamma,\sigma)$ are precisely $(\matL \matL \matR, \matL \matR \matL \matL), (\matL \matL \matR, \matR \matL \matL \matL)$, whereas those contributing to $C_{\matL \matR}(\gamma,\sigma)$ are $(\matL \matR \matL, \matL \matL \matL \matR), (\matR \matL \matL, \matL \matL \matL \matR)$. Thus
	\[
		C_{\matR \matL}(\gamma,\sigma) = C_{\matL \matR}(\gamma,\sigma)=2,
	\]
	illustrating \cref{lem:CRL=CLR}. On the other hand, the only pair contributing to $C_{\matL \matR}(\gamma^{-1},\sigma)$ is $(\matR \matR \matL, \matL \matL \matL \matR)$, so $C_{\matL \matR}(\gamma^{-1},\sigma)=1$. Using \eqref{eq:Simon-linking} and \cref{lem:CRL=CLR}, we obtain
	\[
		\mathrm{Lk}(C_\gamma,C_\sigma)=-2, \qquad \mathrm{Lk}(C_{\gamma^{-1}},C_\sigma)=-1.
	\]
	As this example also shows, $C_{\gamma^{-1}}$ is not simply $C_\gamma$ with its orientation reversed.
\end{example}

\subsection*{Acknowledgements}

This work was supported by JSPS KAKENHI (JP24K16901) and by access to OpenAI models provided through the ChatGPT for Academic Researchers program. 

\subsection*{Use of AI}

ChatGPT (GPT-6 Pro) and OpenAI Codex were used for mathematical discussions and literature searches. The final manuscript and all mathematical arguments were written and independently verified by the author, who takes full responsibility for the content.

\bibliographystyle{amsalpha}
\bibliography{References} 

\end{document}